\documentclass[12pt,a4paper,twoside]{article}
\usepackage[colorlinks=true,linkcolor=blue,citecolor=blue]{hyperref}
\usepackage{amsmath,mathrsfs,multicol,amsfonts,mathtools,amsthm, bm,fancyhdr,lastpage,xcolor,titlesec,cleveref}
\usepackage[english]{babel}
\usepackage{nicematrix}
\usepackage{amssymb}
\usepackage[bottom,multiple]{footmisc}
\usepackage[top=2.5cm,bottom=2.5cm,left=2.5cm,right=2.5cm]{geometry}
\definecolor{myeditcolor}{named}{blue}
\titleformat{\section}[block]{\bfseries\large}{\thesection. }{2pt}{}
\theoremstyle{definition}
\newtheorem{theorem}{Theorem}[section]
\newtheorem{definition}[theorem]{Definition}

\newtheorem{remark}[theorem]{Remark}

\begin{document}
\thispagestyle{empty}
\begin{center}
\noindent\textbf{{\Large Lorentzian Algebraic Ricci Solitons on the Heisenberg Group}}
\end{center}
\begin{center} \noindent Youssef Ayad\footnote{corresponding author: youssef.ayad@edu.umi.ac.ma}\\
\small{\textit{$^1$Faculty of sciences, Moulay Ismail University of Mekn\`{e}s}}\\
\small{\textit{B.P. 11201, Zitoune, Mekn\`{e}s}, Morocco}
\end{center}
\begin{abstract}
There exist three nonequivalent left invariant Lorentzian metrics on the Heisenberg group $H_{2n+1}$, or equivalently, three nonequivalent Lorentzian inner products on the Heisenberg Lie algebra $\mathfrak{h}_{2n+1}$, denoted by $\mu$, $\nu$, and $\phi$. We show that, in a specific case, $\mu$ is an algebraic Ricci soliton that is shrinking. Moreover, $\nu$ is an algebraic Ricci soliton only on the three-dimensional Heisenberg Lie algebra $\mathfrak{h}_3$ and it is shrinking. Finally, we show that $\phi$ is a steady algebraic Ricci soliton on $\mathfrak{h}_{2n + 1}$ for $n > 1$. However, for $n = 1$, $\phi$ is flat.
\end{abstract}
\begin{center}
\textbf{Keywords} Lorentzian metric, Ricci operator, Algebraic Ricci soliton.
\end{center}
\begin{center}
\textbf{Mathematics Subject Classification} 53C50, 53C25.	
\end{center}
\section{Introduction}
\hspace*{3mm}    A Ricci soliton is a generalization of a Ricci-flat manifold and plays a central role in the study of the Ricci flow, an evolution equation for the Ricci curvature of a pseudo-Riemannian manifold. The Ricci soliton equation describes a manifold where the Ricci curvature evolves in a way that is partially balanced by the flow generated by a vector field. Specifically, a pseudo-Riemannian manifold $(M, g)$ is said to be a Ricci soliton if there exists a vector field $X$ and a constant $\eta$ such that the Ricci curvature $\rho$ satisfies the equation
$$\rho + \mathscr{L}_Xg = \eta g,$$
where $\mathscr{L}_Xg$ denotes the Lie derivative of the metric $g$ with respect to the vector field $X$. If $\eta = 0$, the soliton is said to be steady; if $\eta > 0$, it is shrinking; and if $\eta < 0$, it is expanding. Ricci solitons are self-similar solutions of Hamilton's Ricci flow, and they can also be viewed as its fixed points.

An algebraic Ricci soliton is a special type of Ricci soliton on a Lie group $G$ equipped with a left invariant pseudo-Riemannian metric $g$; moreover, $g$ is an algebraic Ricci soliton if its Ricci operator $\operatorname{Ric}$ satisfies the following equation
$$\operatorname{Ric} = \eta\operatorname{Id}_{\mathfrak{g}} + D,$$
where $\eta \in \mathbb{R}$ and $D$ is a derivation of the Lie algebra $\mathfrak{g}$ of $G$. This notion was introduced by Lauret in \cite{lauret2001ricci}, where he established a fundamental connection between solvsolitons and Ricci solitons on homogeneous Riemannian manifolds. More precisely, he proved that any Riemannian solvsoliton metric is a Ricci soliton. In a later work \cite{lauret2011ricci}, Lauret extended his study of Ricci solitons on solvable Lie groups by investigating the structure of solsolitons. He showed that every solsoliton arises, up to isometry, from a nilsoliton together with an abelian Lie algebra of symmetric derivations of the associated metric Lie algebra. He also proved the uniqueness of solsolitons on a fixed solvable Lie group up to isometry and scaling, and classified all solsolitons of dimension at most four. In \cite{jablonski2011concerning,jablonski2014homogeneous,jablonski2015homogeneous}, Jablonski significantly extended Lauret's work by proving that homogeneous Ricci solitons are algebraic, characterizing solvable Lie groups admitting Ricci soliton metrics, and establishing several structural and rigidity results for homogeneous Ricci solitons.

The notion of an algebraic Ricci soliton was extended to the pseudo-Riemannian setting in \cite{batat2017algebraic}, where Batat and Onda investigated algebraic Ricci solitons on three-dimensional Lorentzian Lie groups. They obtained a complete classification of such solitons and showed that, in contrast to the Riemannian case, Lorentzian Ricci solitons are not necessarily algebraic. The classification of Lorentzian algebraic Ricci solitons on four-dimensional nilpotent Lie groups was carried out in \cite{aitbenhaddou2026lorentzian}, building on the classification of left invariant Lorentzian metrics on four-dimensional nilpotent Lie groups established in \cite{bokan2015lorentz}. More generally, a complete study of four-dimensional Lorentzian algebraic Ricci solitons was given in \cite{garcia}. As a consequence, the authors proved that, in sharp contrast to the Riemannian setting, every connected and simply connected four-dimensional Lie group admits a left invariant Lorentzian metric that is a Ricci soliton.

Also, the classification of Lorentzian algebraic Ricci solitons on Lie groups with respect to the canonical and Kobayashi-Nomizu connections was obtained in dimension three in \cite{wang2022canonical}, and in dimension four and in the nilpotent case in \cite{ayad2026canonical}.

The classification of left invariant Lorentzian metrics on the Heisenberg group was obtained in \cite{vukmirovic2015classification}. This problem is equivalent to classifying Lorentzian inner products on the Heisenberg Lie algebra. Vukmirovi{\'c} showed in \cite{vukmirovic2015classification} that there exist three nonequivalent Lorentzian inner products on $\mathfrak{h}_{2n+1}$, which we denote by $\mu$, $\nu$, and $\phi$. However, the Levi-Civita connections and Ricci operators associated with these metrics were not presented in detail. Since the procedure in the Lorentzian setting differs from that in the Riemannian case, the Levi-Civita connection and Ricci operator must be carefully addressed in the Lorentzian setting. We then give the correct results, which also differ from those presented in \cite{nasehi2019geometry}.

In this paper, we describe the Levi-Civita connection and the Ricci operator of $(\mathfrak{h}_{2n + 1}, \mu)$, $(\mathfrak{h}_{2n + 1}, \nu)$ and $(\mathfrak{h}_{2n + 1}, \phi)$. This leads us to find the following main result: 

We show that, in a specific case, $\mu$ is an algebraic Ricci soliton that is shrinking. Moreover, $\nu$ is an algebraic Ricci soliton only on the three-dimensional Heisenberg Lie algebra $\mathfrak{h}_3$ and it is shrinking. Finally, we prove that $\phi$ is a steady algebraic Ricci soliton on $\mathfrak{h}_{2n + 1}$ for $n > 1$. However, for $n = 1$, $\phi$ is flat.
\section{Preliminaries}
Let $(G, g)$ be a Lorentzian Lie group, where the Lorentzian metric $g$ is left invariant and let $\left(\mathfrak{g}, \langle ., .\rangle\right)$ be its associated Lorentzian Lie algebra of dimension $m$. Consider $\nabla$ to be the Levi-Civita connection associated to $\left(\mathfrak{g}, \langle \cdot, \cdot\rangle\right)$; It is a standard fact that $\nabla$ is characterized by the following Koszul formula
$$2\langle \nabla_u v, w\rangle = \langle [u, v], w\rangle + \langle [w, u], v\rangle + \langle [w, v], u\rangle, \quad \forall u, v, w \in \mathfrak{g}.$$
The Riemann curvature tensor $R$ of $\left(\mathfrak{g}, \langle \cdot, \cdot\rangle\right)$ assigns to each pair $u, v \in \mathfrak{g}$ the linear transformation
$$R_{uv} = \nabla_{[u, v]} - [\nabla_u, \nabla_v].$$
More precisely, we have
$$R_{uv}w = \nabla_{[u, v]}w - \nabla_u\nabla_v w + \nabla_v\nabla_u w, \quad \forall w \in \mathfrak{g}.$$
Let $\mathscr{B} = \left\lbrace v_1, \ldots, v_m\right\rbrace$ be an orthonormal basis of $\left(\mathfrak{g}, \langle \cdot, \cdot\rangle\right)$, that is, it satisfies 
$$\langle v_i, v_j \rangle = 0 \quad \forall i \neq j, \quad \langle v_i, v_i \rangle = 1 \quad \forall i = 1, ..., m - 1, \quad \langle v_m, v_m \rangle = -1.$$
The structure constants $\xi_{ijk}$ of $\left(\mathfrak{g}, \langle ., .\rangle\right)$ are defined by
$$\xi_{ijk} = \langle [v_i, v_j], v_k\rangle.$$
By the Koszul formula, the Levi-Civita connection of $\left(\mathfrak{g}, \langle \cdot, \cdot\rangle\right)$ is described by
\begin{equation}\label{Levi}
\nabla_{v_i}v_j = \displaystyle{\sum_{k = 1}^{m} \langle v_k, v_k\rangle \frac{1}{2}\left(\xi_{ijk} - \xi_{jki} + \xi_{kij}\right)v_k}.
\end{equation}
The Ricci tensor $\rho$ of $\left(\mathfrak{g}, \langle \cdot, \cdot\rangle\right)$ is defined by
$$\rho(u, v) = \displaystyle{\sum_{k = 1}^{m} \langle v_k, v_k\rangle}\langle R_{v_ku}v_k, v\rangle.$$
The Ricci operator $\operatorname{Ric}$ of $\left(\mathfrak{g}, \langle \cdot, \cdot\rangle\right)$ is defined by
$$\rho(u, v) = \langle \operatorname{Ric}(u), v\rangle,$$
or equivalently
$$\operatorname{Ric}(u) = \displaystyle{\sum_{k = 1}^{m} \langle v_k, v_k\rangle R_{v_ku}v_k}.$$
\begin{definition}
\begin{enumerate}
\item A Lorentzian inner product on the Lie algebra $\mathfrak{g}$ is called flat if its Riemann curvature tensor is identically zero.
\item A Lorentzian inner product on the Lie algebra $\mathfrak{g}$ is called an algebraic Ricci soliton if its Ricci operator $\operatorname{Ric}$ satisfies the following equality
$$\operatorname{Ric} = \eta\operatorname{Id}_{\mathfrak{g}} + D,$$
where $\eta$ is a real number and $D$ is a derivation of $\mathfrak{g}$, that is
$$[D(u), v] + [u, D(v)] = D[u, v], \quad \forall u, v \in \mathfrak{g}.$$
\end{enumerate}
\end{definition}
\section{The Heisenberg Lie algebra}
The Heisenberg Lie group $H_{2n + 1}$ and its associated Heisenberg Lie algebra $\mathfrak{h}_{2n + 1}$ are introduced in \cite{vukmirovic2015classification}. Moreover $\mathfrak{h}_{2n + 1}$ has a basis $B = \left\lbrace f_1, g_1, \ldots, f_n, g_n, z\right\rbrace$ with nonzero commutators
$$[f_i, g_i] = z, \quad i = 1, \ldots, n.$$
\begin{theorem}
\cite{vukmirovic2015classification} Any Lorentzian inner product on $\mathfrak{h}_{2n + 1}$, up to Lie algebra automorphism, is represented in the basis $B$ ($B$ is a special basis satisfying $(1)$ in \cite{vukmirovic2015classification}, as mentioned by the author in \cite{vukmirovic2015classification}) by one of the following matrices
$$\begin{aligned}
\mu &= \begin{bmatrix}
D_{n - 1}(\sigma) & 0\\
0 & S
\end{bmatrix}, \; S = \operatorname{diag}\left\lbrace 1, 1, -\lambda\right\rbrace, \; \lambda > 0,\\
\nu &= \begin{bmatrix}
D_{n - 1}(\sigma) & 0\\
0 & S
\end{bmatrix}, \; S = \operatorname{diag}\left\lbrace 1, -1, \lambda\right\rbrace, \; \lambda > 0,\\
\phi &= \begin{bmatrix}
D_{n - 1}(\sigma) & 0\\
0 & S
\end{bmatrix}, \; S = \begin{bmatrix}
1 & 0 & 0\\
0 & 0 & 1\\
0 & 1 & 0
\end{bmatrix},
\end{aligned}$$
where $D_{n - 1}(\sigma) = \operatorname{diag}\left\lbrace \sigma_1, \sigma_1, \ldots, \sigma_{n - 1}, \sigma_{n - 1}\right\rbrace$ and $\sigma_1 \geq \ldots \geq \sigma_{n - 1} \geq 1$.
\end{theorem}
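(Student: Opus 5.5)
The plan is to classify a Lorentzian inner product $\langle\cdot,\cdot\rangle$ on $\mathfrak{h}_{2n+1}$ by how it restricts to the center $\mathfrak{z}=\mathbb{R}z$, and then to normalize what remains with automorphisms. We will use that $\operatorname{Aut}(\mathfrak{h}_{2n+1})$ contains every map of the form $x\mapsto Ax+\ell(x)z$, $z\mapsto (\det\text{-type factor})\,z$, where $A$ is conformally symplectic on $V=\operatorname{span}\{f_i,g_i\}$ (so $\omega(Ax,Ay)=c\,\omega(x,y)$ and $z\mapsto cz$) and $\ell\in V^*$ is arbitrary. This is checked directly from the bracket $[x,y]=\omega(x,y)z$. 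We then split into three cases: $z$ timelike, $z$ spacelike, and $z$ null.

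\textbf{Timelike or spacelike center.} If $\langle z,z\rangle\neq0$, then $z^\perp$ is a complement of $\mathfrak{z}$. Choose $\ell$ so that the shear moves $V$ onto $z^\perp$; equivalently, we may assume $V\perp z$. Now $\langle\cdot,\cdot\rangle|_V$ is nondegenerate. It is positive definite if $z$ is timelike and of Lorentzian signature if $z$ is spacelike.

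In the definite case, we simultaneously normalize the pair $(\langle\cdot,\cdot\rangle|_V,\omega)$. Write $\omega(x,y)=\langle Jx,y\rangle$ with $J$ skew-adjoint. $J$ then has eigenvalues $\pm i a_k$, and an orthonormal basis can be chosen in which $J$ is block diagonal with blocks $a_k\begin{bmatrix}0&-1\\1&0\end{bmatrix}$. Rescaling each $f_k,g_k$ by $a_k^{-1/2}$ makes $\omega$ standard and the metric $\operatorname{diag}(a_1^{-1},a_1^{-1},\dots)$. Finally, use the conformal factor $c$ together with reordering to put the smallest $a^{-1}$ in the last block equal to $1$, so that all other $\sigma_i\ge1$. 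The value of $\langle z,z\rangle$ becomes $-\lambda$. This gives $\mu$.

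In the Lorentzian case the same normal form for a skew operator on a Lorentzian space is used. The key claim is that $J$ preserves a Lorentzian $2$-plane. If so, $J$ acts there as a boost, with eigenvalues $\pm a$ real, and in a suitable basis this yields the block $\operatorname{diag}(1,-1)$ with $\omega$ standard. The orthogonal complement is positive definite and is treated as before, and after rescaling we obtain $\nu$.

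The main obstacle is exactly this claim. A priori a skew map of $\mathbb{R}^{2n-1,1}$ could fix a null line with a nilpotent part, or fail to preserve any $2$-plane. To rule this out, I would use that $J$ is also invertible, since $\omega$ is nondegenerate. Invertibility excludes the lightlike, nilpotent-type Jordan blocks, which would force a kernel or odd size. This leaves only the boost type. The argument needs to be carried out carefully via the classification of skew-adjoint operators in Lorentzian signature.

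\textbf{Null center.} If $z$ is null, pick a null $w$ with $\langle z,w\rangle=1$. Since $z^\perp\supset\mathfrak{z}$ and $z^\perp$ is a degenerate hyperplane, we have $z^\perp=\mathfrak{z}\oplus W$ with $W$ positive definite of dimension $2n-1$. Using shears, we arrange $V=W\oplus\mathbb{R}w$. Set $J=\omega$ restricted to $W$; it has odd dimension, so it has a one-dimensional kernel, spanned by $e$. Since $\omega$ is nondegenerate on $V$, we get $\omega(e,w)\ne0$. Apply the definite normal form to the $\langle\cdot,\cdot\rangle$-orthogonal complement of $e$ in $W$, adjusting $w$ by multiples of elements of $W$ so that it is $\omega$-orthogonal to that complement. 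This adjustment must be compensated by a shear to keep $w$ null and orthogonal to $W$; verifying that the compensation works is a second, routine, step to check. Scaling then gives $\langle e,e\rangle=1$ and $\omega(e,w)=1$, which produces the block $S$ in $\phi$ in the basis $(e,w,z)$ with $f_n=e$, $g_n=w$.

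Nonequivalence follows because the causal character of $\mathfrak{z}$ is invariant under automorphisms.
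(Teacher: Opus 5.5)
The paper gives no proof of this theorem; it is quoted from Vukmirovi\'c, so your proposal has to stand on its own. Your overall route is the right one: split by the causal type of $z$, shear $V$ onto a complement of $\mathfrak{z}$, and normalize the skew-adjoint $J$ defined by $\omega(x,y)=\langle Jx,y\rangle$. The $\mu$ case is complete, including $\sigma_{n-1}\ge 1$. There the free parameter $\lambda$ absorbs the conformal scaling, and you may choose which rotation block goes last.

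\textbf{The gap.} It lies in the steps ``after rescaling we obtain $\nu$'' and ``Scaling then gives \dots\ the block $S$''. Once the splitting and block normal forms are fixed, the only freedom that rescales blocks is the multiplier $c$. Normalizing $S$ uses it up. Unlike the $\mu$ case, the block placed last in $\nu$ is the distinguished boost block, which you cannot choose. So nothing is left to force $\sigma_{n-1}\ge 1$, and in fact it cannot be forced:
\begin{itemize}
\item An automorphism with multiplier $c$ replaces $J$ by $c^{-1}A^{-1}JA$.
\item In the $\nu$ normal form, the block $\operatorname{diag}(1,-1)$ contributes eigenvalues $\pm1$, and the $k$-th block contributes $\pm i/\sigma_k$. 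So $\sigma_k$ is a ratio of eigenvalue moduli and is an automorphism invariant.
\item For instance, $\operatorname{diag}(\tfrac12,\tfrac12,1,-1,1)$ on $\mathfrak{h}_5$ is equivalent to no $\nu$ with $\sigma_1\ge1$. It is also equivalent to no $\mu$ or $\phi$, by the causal type of $z$.
\item In the null case, write $\omega(e,\cdot)=\kappa\langle z,\cdot\rangle$ with $\langle e,e\rangle=1$; both functionals have kernel $z^\perp$. One checks $\kappa\mapsto c^{-2}\kappa$, while the eigenvalue moduli $b_k$ of $J$ on $E=e^\perp\cap W$ scale by $|c|^{-1}$. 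Hence $b_k^2/|\kappa|=\sigma_k^{-2}$ is invariant.
\end{itemize}
What your argument actually proves, and what is true, is the normal form with $\sigma_1\ge\dots\ge\sigma_{n-1}>0$ for $\nu$ and $\phi$. The normalization $\sigma_{n-1}\ge1$ in the statement as given is attainable only for $\mu$. You should have detected this rather than assert it.

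\textbf{Two smaller points.} First, your justification of the boost lemma aims at the wrong alternatives: every real operator preserves a line or a $2$-plane. What must be excluded is a timelike direction inside a rotation-type block. The clean argument is as follows. On the invariant subspace for $\pm ib$, $J/b$ is an orthogonal complex structure, so that subspace has even index, hence index $0$. A semisimple real pair $\pm a$ of multiplicity $m$ spans a subspace of signature $(m,m)$. Quadruples $\pm\alpha\pm i\beta$ with $\alpha\beta\neq0$, and nontrivial Jordan blocks at nonzero eigenvalues, need index $\ge2$. With $\ker J=0$ and total index $1$, there is therefore exactly one simple real pair, and its null eigenlines span the Lorentzian plane.

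Second, in the null case the compensation works, but not by shearing $w$ alone. Adding multiples of $z$ to $w-x$ restores nullity but leaves $\langle w-x,y\rangle=-\langle x,y\rangle$ for $y\in E$ unchanged. You must also shear the vectors of $E$ and $e$. This is harmless: brackets and inner products inside $z^\perp$ ignore multiples of $z$, while $\langle z,w\rangle\neq0$ lets you kill their inner products with $w$. Also, $\omega|_W$ has a one-dimensional kernel because $W$ is a hyperplane in a symplectic space, not merely because $\dim W$ is odd.
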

\subsection{The Lorentzian Lie algebra $\left(\mathfrak{h}_{2n + 1}, \mu\right)$}
An orthonormal basis $\mathscr{B} = \left\lbrace v_1, \ldots, v_{2n + 1}\right\rbrace$ of $\left(\mathfrak{h}_{2n + 1}, \mu\right)$ is given by
$$\begin{aligned}
v_1 &= \frac{1}{\sqrt{\sigma_1}}f_1,
&\qquad
v_2 &= \frac{1}{\sqrt{\sigma_1}}g_1, \\[2pt]
v_3 &= \frac{1}{\sqrt{\sigma_2}}f_2,
&\qquad
v_4 &= \frac{1}{\sqrt{\sigma_2}}g_2, \\
&\vdots \\
v_{2n-3} &= \frac{1}{\sqrt{\sigma_{n-1}}}f_{n-1},
&\qquad
v_{2n-2} &= \frac{1}{\sqrt{\sigma_{n-1}}}g_{n-1}, \\[2pt]
v_{2n-1} &= f_n,
&\qquad
v_{2n} &= g_n, \\[2pt]
v_{2n+1} &= \frac{1}{\sqrt{\lambda}}z.
\end{aligned}$$
The bracket in the basis $\mathscr{B}$ is given by
$$\begin{aligned}
[v_1, v_2] &= \frac{\sqrt{\lambda}}{\sigma_1}v_{2n + 1},\\
[v_3, v_4] &= \frac{\sqrt{\lambda}}{\sigma_2}v_{2n + 1},\\
&\vdots \\
[v_{2n - 3}, v_{2n - 2}] &= \frac{\sqrt{\lambda}}{\sigma_{n - 1}}v_{2n + 1},\\
[v_{2n - 1}, v_{2n}] &= \sqrt{\lambda}v_{2n + 1}.
\end{aligned}$$
The structure constants are
$$\begin{aligned}
\xi_{12(2n + 1)} &= -\xi_{21(2n + 1)} = \frac{-\sqrt{\lambda}}{\sigma_1},\\
\xi_{34(2n + 1)} &= -\xi_{43(2n + 1)} = \frac{-\sqrt{\lambda}}{\sigma_2},\\
&\vdots \\
\xi_{(2n - 3)(2n - 2)(2n + 1)} &= -\xi_{(2n - 2)(2n - 3)(2n + 1)} = \frac{-\sqrt{\lambda}}{\sigma_{n - 1}},\\
\xi_{(2n - 1)(2n)(2n + 1)} &= -\xi_{(2n)(2n - 1)(2n + 1)} = -\sqrt{\lambda}.
\end{aligned}$$
\begin{theorem}
The nonzero terms in the Levi-Civita connection of $\left(\mathfrak{h}_{2n + 1}, \mu\right)$ are
$$
\begin{aligned}
\nabla_{v_i}v_i &= 0, \qquad i=1,\ldots,2n+1,\\[2mm]
\nabla_{v_1}v_2
&=-\nabla_{v_2}v_1
=\frac{\sqrt{\lambda}}{2\sigma_1}v_{2n+1},\\
\nabla_{v_1}v_{2n+1}
&=\nabla_{v_{2n+1}}v_1
=\frac{\sqrt{\lambda}}{2\sigma_1}v_2,\\
\nabla_{v_2}v_{2n+1}
&=\nabla_{v_{2n+1}}v_2
=-\frac{\sqrt{\lambda}}{2\sigma_1}v_1,\\[2mm]
\nabla_{v_3}v_4
&=-\nabla_{v_4}v_3
=\frac{\sqrt{\lambda}}{2\sigma_2}v_{2n+1},\\
\nabla_{v_3}v_{2n+1}
&=\nabla_{v_{2n+1}}v_3
=\frac{\sqrt{\lambda}}{2\sigma_2}v_4,\\
\nabla_{v_4}v_{2n+1}
&=\nabla_{v_{2n+1}}v_4
=-\frac{\sqrt{\lambda}}{2\sigma_2}v_3,\\
&\hspace{2cm}\vdots\\[-1mm]
\nabla_{v_{2n-3}}v_{2n-2}
&=-\nabla_{v_{2n-2}}v_{2n-3}
=\frac{\sqrt{\lambda}}{2\sigma_{n-1}}v_{2n+1},\\
\nabla_{v_{2n-3}}v_{2n+1}
&=\nabla_{v_{2n+1}}v_{2n-3}
=\frac{\sqrt{\lambda}}{2\sigma_{n-1}}v_{2n-2},\\
\nabla_{v_{2n-2}}v_{2n+1}
&=\nabla_{v_{2n+1}}v_{2n-2}
=-\frac{\sqrt{\lambda}}{2\sigma_{n-1}}v_{2n-3}, \\[2mm]
\nabla_{v_{2n-1}}v_{2n}
&=-\nabla_{v_{2n}}v_{2n-1}
=\frac{\sqrt{\lambda}}{2}v_{2n+1},\\
\nabla_{v_{2n-1}}v_{2n+1}
&=\nabla_{v_{2n+1}}v_{2n-1}
=\frac{\sqrt{\lambda}}{2}v_{2n},\\
\nabla_{v_{2n}}v_{2n+1}
&=\nabla_{v_{2n+1}}v_{2n}
=-\frac{\sqrt{\lambda}}{2}v_{2n-1}.
\end{aligned}
$$
\end{theorem}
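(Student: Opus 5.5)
The plan is to read every entry off formula \eqref{Levi}, using the structure constants $\xi_{ijk}$ listed just before the statement. The point to watch is the Lorentzian sign factor $\langle v_k, v_k\rangle$. To keep the bookkeeping uniform, write $a_j = \sqrt{\lambda}/\sigma_j$ for $j=1,\ldots,n-1$ and $a_n=\sqrt{\lambda}$, and let $N=2n+1$. The only nonzero brackets are $[v_{2j-1},v_{2j}]=a_jv_N$. Hence the only nonzero structure constants are
$$\xi_{(2j-1)(2j)N}=-\xi_{(2j)(2j-1)N}=-a_j,$$
where the minus sign comes from $\langle v_N,v_N\rangle=-1$.

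\textbf{Step 1: vanishing terms.} First I reduce which entries can be nonzero. Each of the three terms $\xi_{ijk}$, $\xi_{jki}$, $\xi_{kij}$ in \eqref{Levi} vanishes unless $\{i,j,k\}=\{2j'-1,2j',N\}$ for some block index $j'$. Consequently $\nabla_{v_i}v_j=0$ unless $i$ and $j$ are two distinct elements of one such triple.

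For the diagonal terms $\nabla_{v_i}v_i$, take $i\le 2n$. Then $\xi_{iik}=0$ by antisymmetry. Also $\xi_{iki}=\langle[v_i,v_k],v_i\rangle=0$ and $\xi_{kii}=0$, because every bracket lies in $\operatorname{span}\{v_N\}$, which is orthogonal to $v_i$. For $i=N$, the vector $v_N$ is central, so all three terms vanish. This gives $\nabla_{v_i}v_i=0$ for all $i$.

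\textbf{Step 2: the nonzero terms in one block.} By symmetry of the blocks, it suffices to fix one triple $(2j-1,2j,N)$ and compute the six ordered pairs with one short evaluation of \eqref{Levi} each.

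\begin{itemize}
\item For $\nabla_{v_{2j-1}}v_{2j}$, only $k=N$ contributes. The coefficient is $\langle v_N,v_N\rangle\cdot\frac12\xi_{(2j-1)(2j)N}=(-1)(-a_j/2)$, which gives $\frac{a_j}{2}v_N$.
\item For $\nabla_{v_{2j-1}}v_N$, only $k=2j$ contributes. The coefficient is $\frac12\bigl(0-0+\xi_{(2j)(2j-1)N}\bigr)=\frac{a_j}{2}$, which gives $\frac{a_j}{2}v_{2j}$.
\item For $\nabla_{v_N}v_{2j-1}$, again only $k=2j$ contributes. The coefficient is $\frac12\bigl(0-\xi_{(2j-1)(2j)N}+0\bigr)=\frac{a_j}{2}$, so $\nabla_{v_N}v_{2j-1}=\nabla_{v_{2j-1}}v_N$.
\end{itemize}

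The pairs with $v_{2j}$ and $v_N$ are handled the same way with $k=2j-1$. The antisymmetry of $\xi$ in its first two indices produces the extra minus sign, giving $-\frac{a_j}{2}v_{2j-1}$. Finally, $\nabla_{v_{2j}}v_{2j-1}=-\nabla_{v_{2j-1}}v_{2j}$ follows either directly from \eqref{Levi} or from torsion-freeness, since $\nabla_{v_{2j-1}}v_{2j}-\nabla_{v_{2j}}v_{2j-1}=[v_{2j-1},v_{2j}]=a_jv_N$ and $\nabla_{v_{2j-1}}v_{2j}=\frac{a_j}{2}v_N$. Substituting $a_j=\sqrt{\lambda}/\sigma_j$, and $a_n=\sqrt{\lambda}$ for the last block, gives the list in the statement.

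\textbf{Main obstacle.} The only delicate point is the sign bookkeeping. The timelike factor $\langle v_N,v_N\rangle=-1$ enters twice: once in the definition of $\xi_{ijk}$ and once in \eqref{Levi}. This double appearance is exactly where the Lorentzian computation differs from the Riemannian one, and it is the place where one can slip. As a check, I would verify metric compatibility, $\langle\nabla_{v_i}v_j,v_k\rangle+\langle v_j,\nabla_{v_i}v_k\rangle=0$, on one block, for instance with $i=2j-1$, $j=2j$, $k=N$:
$$\Bigl\langle \tfrac{a_j}{2}v_N,v_N\Bigr\rangle+\Bigl\langle v_{2j},\tfrac{a_j}{2}v_{2j}\Bigr\rangle=-\tfrac{a_j}{2}+\tfrac{a_j}{2}=0.$$
This confirms that the signs are consistent.
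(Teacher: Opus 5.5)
Your proposal is correct and follows the same route as the paper, which simply says that every entry follows from applying formula (\ref{Levi}) to the listed structure constants. Your block-by-block evaluation checks out entry by entry, including the timelike sign $\langle v_{2n+1},v_{2n+1}\rangle=-1$, which you track both in $\xi_{ijk}$ and in the coefficient of (\ref{Levi}); the only blemish is that $j$ is reused as both a block index and the second slot of $\nabla_{v_i}v_j$ (e.g. ``$j=2j$'' in your compatibility check), so rename one of them.
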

\begin{proof}
Since the structure constants are well described, the result follows by using the formula (\ref{Levi}).
\end{proof}
\subsubsection{The Ricci operator of $\left(\mathfrak{h}_{2n + 1}, \mu\right)$}
\begin{theorem}\label{ric}
The Ricci operator of $\left(\mathfrak{h}_{2n + 1}, \mu\right)$ is represented in the basis $\mathscr{B}$ by the following diagonal matrix
$$\operatorname{Ric} = \operatorname{diag}\left\lbrace \frac{\lambda}{2\sigma_1^2}, \frac{\lambda}{2\sigma_1^2}, \frac{\lambda}{2\sigma_2^2}, \frac{\lambda}{2\sigma_2^2}, \ldots, \frac{\lambda}{2\sigma_{n - 1}^2}, \frac{\lambda}{2\sigma_{n - 1}^2}, \frac{\lambda}{2}, \frac{\lambda}{2}, \delta\right\rbrace,$$
where $\delta$ is given by $\delta = \left(\displaystyle{\sum_{k = 1}^{n - 1}\frac{-\lambda}{2\sigma_k^2}}\right) - \frac{\lambda}{2}$.
\end{theorem}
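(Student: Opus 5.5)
The plan is a direct computation of $\operatorname{Ric}(v_j)=\sum_{k}\langle v_k,v_k\rangle R_{v_kv_j}v_k$ for each basis vector, using the Levi-Civita connection from the previous theorem together with $R_{uv}w=\nabla_{[u,v]}w-\nabla_u\nabla_v w+\nabla_v\nabla_u w$. The brackets and the connection respect the splitting of $\mathfrak{h}_{2n+1}$ into the planes $P_i=\operatorname{span}\{v_{2i-1},v_{2i}\}$ and the line spanned by $v_{2n+1}$. So it suffices to treat one plane $P_i$ together with $v_{2n+1}$, and then sum the contributions over the planes.

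Set $a_i=\frac{\sqrt{\lambda}}{2\sigma_i}$ for $i\le n-1$ and $a_n=\frac{\sqrt{\lambda}}{2}$. Write $z=v_{2n+1}$, with $\langle z,z\rangle=-1$.

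\textbf{Step 1: $\operatorname{Ric}(v_{2i-1})$.} Fix $i$ and write $x=v_{2i-1}$, $y=v_{2i}$, $a=a_i$, so that $[x,y]=2az$, $\nabla_xy=az$, $\nabla_xz=\nabla_zx=ay$ and $\nabla_yz=\nabla_zy=-ax$. The sum splits into three kinds of terms.
\begin{itemize}
\item For $v_k=y$, we have $[y,x]=-2az$, and
$$R_{yx}y=\nabla_{-2az}y-\nabla_y\nabla_xy+\nabla_x\nabla_yy=2a^2x+a^2x+0=3a^2x.$$
\item For $v_k=z$, the bracket $[z,x]$ vanishes, and $R_{zx}z=-\nabla_z(ay)=a^2x$. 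This term enters with the sign $\langle z,z\rangle=-1$.
\item For $v_k$ in a different plane $P_j$, every term vanishes. Indeed $[v_k,x]=0$, $\nabla_x v_k=0$, and $\nabla_{v_k}v_k=0$.
\end{itemize}
Hence $\operatorname{Ric}(x)=(3a^2-a^2)x=2a^2x=\frac{\lambda}{2\sigma_i^2}x$, and $\frac{\lambda}{2}x$ when $i=n$. The computation for $v_{2i}$ is symmetric, by exchanging the roles of $x$ and $y$.

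\textbf{Step 2: $\operatorname{Ric}(z)$.} For each plane I compute
$$R_{xz}x=\nabla_{[x,z]}x-\nabla_x\nabla_zx+\nabla_z\nabla_xx=-\nabla_x(ay)=-a^2z,$$
and in the same way $R_{yz}y=-\nabla_y(-ax)=-a^2z$. The term with $v_k=z$ vanishes. Summing over all planes gives
$$\operatorname{Ric}(z)=-\sum_{i}2a_i^2\,z=\Big(\sum_{k=1}^{n-1}\frac{-\lambda}{2\sigma_k^2}-\frac{\lambda}{2}\Big)z=\delta z.$$

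\textbf{Step 3: diagonality.} The computations in Steps 1 and 2 already show that each $\operatorname{Ric}(v_j)$ is a multiple of $v_j$. Vectors from other planes only produce vanishing terms, so there are no cross terms. This gives the stated diagonal matrix.

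The main obstacle is keeping the Lorentzian sign bookkeeping correct. This includes the factor $\langle z,z\rangle=-1$ in the sum for $\operatorname{Ric}$, as well as the signs of $\nabla_yz$ and $\nabla_{-2az}$. Using the Riemannian convention here would wrongly produce $4a^2$ in place of $2a^2$.
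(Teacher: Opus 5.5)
Your proposal is correct and follows essentially the same route as the paper. Both compute $\operatorname{Ric}(v_j)$ term by term from the Levi-Civita connection: $R_{v_2v_1}v_2=3a^2v_1$ and $R_{v_{2n+1}v_1}v_{2n+1}=a^2v_1$ combine with the timelike sign to give $2a^2v_1$, and $\operatorname{Ric}(v_{2n+1})=-\sum_{k}\nabla_{v_k}\nabla_{v_{2n+1}}v_k$ is summed over the planes. Working with a generic plane and constant $a_i$ simply makes explicit the ``same reasoning applies'' step that the paper leaves to the reader.
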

\begin{proof}
We recall that the Ricci operator is defined by
$$\operatorname{Ric}(u) = \displaystyle{\sum_{k = 1}^{2n + 1} \mu(v_k, v_k) R_{v_ku}v_k},$$ 
where 
$$R_{v_ku}v_k = \nabla_{[v_k, u]}v_k - \nabla_{v_k}\nabla_u v_k + \nabla_u\nabla_{v_k} v_k.$$
Since $\nabla_{v_k} v_k = 0, \forall k = 1, \ldots, 2n + 1$, then the curvature simplifies to
$$R_{v_ku}v_k = \nabla_{[v_k, u]}v_k - \nabla_{v_k}\nabla_u v_k.$$
Now, let us compute $\operatorname{Ric}(v_1)$, we have
$$\operatorname{Ric}(v_1) = \displaystyle{\sum_{k = 1}^{2n + 1} \mu(v_k, v_k) R_{v_kv_1}v_k}.$$
In view of the bracket in $\mathscr{B}$ and the Levi-Civita connection, the only nonzero terms in $R_{v_kv_1}v_k$ are the following
$$\begin{aligned}
R_{v_2v_1}v_2 &= \nabla_{[v_2, v_1]}v_2 - \nabla_{v_2}\nabla_{v_1}v_2\\
&= \frac{-\sqrt{\lambda}}{\sigma_1}\nabla_{v_{2n + 1}}v_2 - \frac{\sqrt{\lambda}}{2\sigma_1}\nabla_{v_2}v_{2n + 1}\\
&= \frac{-3\sqrt{\lambda}}{2\sigma_1} \times \frac{-\sqrt{\lambda}}{2\sigma_1}v_1 = \frac{3\lambda}{4\sigma_1^2}v_1.\\
R_{v_{2n + 1}v_1}v_{2n + 1} &= - \nabla_{v_{2n + 1}}\nabla_{v_1}v_{2n + 1}\\
&= \frac{-\sqrt{\lambda}}{2\sigma_1}\nabla_{v_{2n + 1}}v_2 = \frac{\lambda}{4\sigma_1^2}v_1.
\end{aligned}$$
Since $v_{2n + 1}$ is timelike, we have that
$$\operatorname{Ric}(v_1) = R_{v_2v_1}v_2 - R_{v_{2n + 1}v_1}v_{2n + 1} = \frac{\lambda}{2\sigma_1^2}v_1.$$
The same reasoning applies to the other terms, despite the timelike vector $v_{2n+1}$; for the latter, we have
\begin{eqnarray*}
\operatorname{Ric}(v_{2n + 1}) &=& \displaystyle{\sum_{k = 1}^{2n + 1} \mu(v_k, v_k) \left( -\nabla_{v_k}\nabla_{v_{2n + 1}}v_k\right)}\\
&=& -\displaystyle{\sum_{k = 1}^{2n}\nabla_{v_k}\nabla_{v_{2n + 1}}v_k}.
\end{eqnarray*}
One can see that 
$$\begin{aligned}
\nabla_{v_1}\nabla_{v_{2n + 1}}v_1 &= \frac{\sqrt{\lambda}}{2\sigma_1}\nabla_{v_1}v_2 = \frac{\lambda}{4\sigma_1^2}v_{2n + 1},\\
\nabla_{v_2}\nabla_{v_{2n + 1}}v_2 &= \frac{-\sqrt{\lambda}}{2\sigma_1}\nabla_{v_2}v_1 = \frac{\lambda}{4\sigma_1^2}v_{2n + 1},\\
&\vdots \\
\nabla_{v_{2n - 1}}\nabla_{v_{2n + 1}}v_{2n - 1} &= \frac{\sqrt{\lambda}}{2}\nabla_{v_{2n - 1}}v_{2n} = \frac{\lambda}{4}v_{2n + 1},\\
\nabla_{v_{2n}}\nabla_{v_{2n + 1}}v_{2n} &= \frac{-\sqrt{\lambda}}{2}\nabla_{v_{2n}}v_{2n - 1} = \frac{\lambda}{4}v_{2n + 1}.
\end{aligned}$$
Therefore
\begin{eqnarray*}
\operatorname{Ric}(v_{2n + 1}) &=& -\left(\frac{\lambda}{2\sigma_1^2} + \frac{\lambda}{2\sigma_2^2} + \ldots + \frac{\lambda}{2\sigma_{n - 1}^2} + \frac{\lambda}{2}\right)v_{2n + 1}\\
&=& \left(\left(\displaystyle{\sum_{k = 1}^{n - 1}\frac{-\lambda}{2\sigma_k^2}}\right) - \frac{\lambda}{2}\right)v_{2n + 1} = \delta v_{2n + 1}.
\end{eqnarray*}
\end{proof}
\begin{remark}
If $n = 1$, we recover the Ricci operator of the Lorentzian inner product $\mu$ on the three-dimensional Heisenberg Lie algebra described (up to permutation) in \cite{ayad2026lorentzian} by
$$\operatorname{Ric} = \operatorname{diag}\left\lbrace \frac{\lambda}{2}, \frac{\lambda}{2}, \frac{-\lambda}{2}\right\rbrace.$$
\end{remark}
\begin{theorem}
The Lorentzian inner product $\mu$ is an algebraic Ricci soliton if and only if
$$\sigma_1 = \sigma_2 = \ldots = \sigma_{n - 1} = 1.$$
In this case we have 
$$\operatorname{Ric} = \operatorname{diag}\left\lbrace \frac{\lambda}{2}, \frac{\lambda}{2},\ldots, \frac{\lambda}{2}, \frac{-n\lambda}{2}\right\rbrace.$$
The algebraic Ricci soliton equation is given by
$$\operatorname{Ric} = \frac{(n + 2)\lambda}{2}I_{2n + 1} + D,$$
where
$$D = \operatorname{diag}\left\lbrace \frac{-(n + 1)\lambda}{2}, \frac{-(n + 1)\lambda}{2},\ldots, \frac{-(n + 1)\lambda}{2}, -(n + 1)\lambda\right\rbrace,$$
is a derivation of $\mathfrak{h}_{2n + 1}$ with respect to the orthonormal basis $\mathscr{B}$.
\end{theorem}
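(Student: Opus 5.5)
The plan is to use Theorem \ref{ric} directly. Since $\operatorname{Ric}$ is already known in the orthonormal basis $\mathscr{B}$, the soliton equation $\operatorname{Ric} = \eta\operatorname{Id} + D$ forces $D = \operatorname{Ric} - \eta\operatorname{Id}$. So $D$ is completely determined by $\eta$, and in particular it is diagonal in $\mathscr{B}$. I do not need to consider general (non-diagonal) derivations. The question becomes: for which $\eta$ and which $\sigma_k$ is this diagonal map a derivation?

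Write $D = \operatorname{diag}\{d_1,\ldots,d_{2n+1}\}$ and set $c_i = \sqrt{\lambda}/\sigma_i$ for $i\le n-1$ and $c_n=\sqrt{\lambda}$, so that $[v_{2i-1},v_{2i}] = c_i v_{2n+1}$.
\begin{itemize}
\item For the pairs $(v_j,v_k)$ with $[v_j,v_k]=0$, we have $[D v_j, v_k] + [v_j, Dv_k] = (d_j+d_k)[v_j,v_k] = 0$. Hence every such derivation identity holds automatically.
\item The only nontrivial conditions come from the brackets $[v_{2i-1},v_{2i}]$. They read $(d_{2i-1}+d_{2i})c_i = d_{2n+1}c_i$, that is, $d_{2i-1}+d_{2i} = d_{2n+1}$ for $i=1,\ldots,n$.
\end{itemize}

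Substituting the entries of Theorem \ref{ric} gives $\lambda/\sigma_i^2 - 2\eta = \delta - \eta$ for $i \le n-1$, and $\lambda - 2\eta = \delta - \eta$ for the last pair. Hence $\lambda/\sigma_i^2 = \delta + \eta = \lambda$ for all $i$. Since $\lambda>0$ and $\sigma_i \ge 1$, this forces $\sigma_i = 1$ for every $i$, which proves necessity.

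Conversely, suppose $\sigma_1=\cdots=\sigma_{n-1}=1$. Then:
\begin{itemize}
\item The Ricci operator becomes $\operatorname{diag}\{\lambda/2,\ldots,\lambda/2,\delta\}$ with $\delta = -(n-1)\lambda/2 - \lambda/2 = -n\lambda/2$.
\item The condition $\delta + \eta = \lambda$ gives $\eta = (n+2)\lambda/2$.
\item Then $D$ has entries $\lambda/2 - (n+2)\lambda/2 = -(n+1)\lambda/2$ on the first $2n$ slots, and $-n\lambda/2 - (n+2)\lambda/2 = -(n+1)\lambda$ on the last one.
\item This $D$ satisfies $d_{2i-1}+d_{2i} = d_{2n+1}$, so it is a derivation by the criterion above.
\end{itemize}
Since $\eta>0$, the soliton is shrinking.

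There is no real obstacle; the argument is bookkeeping. The two points to be careful about are these. First, the equation of the last pair, which has $\sigma=1$, pins down $\eta$ before the other pairs are compared against it. Second, for $n=1$ the condition on the $\sigma_k$ is vacuous, consistent with the statement.
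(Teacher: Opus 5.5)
Your proposal is correct and follows the same route as the paper. Both arguments force $D=\operatorname{Ric}-\eta\operatorname{Id}$ to be diagonal in $\mathscr{B}$, reduce the derivation condition to $d_{2i-1}+d_{2i}=d_{2n+1}$ on each bracket pair, and compare the resulting values of $\eta$ with the one from the last pair (where $\sigma=1$) to get $\sigma_i=1$ and $\eta=\frac{(n+2)\lambda}{2}$. Your direct deduction $\lambda/\sigma_i^2=\delta+\eta=\lambda$ and your explicit check of the converse are somewhat tidier than the paper's pairwise comparison.
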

\begin{proof}
Assume that $\mu$ is an algebraic Ricci soliton, then there exists $\eta \in \mathbb{R}$ and a derivation $D$ of $\mathfrak{h}_{2n + 1}$ such that
$$\operatorname{Ric} = \eta I_{2n + 1} + D.$$
From this, it follows that
\begin{eqnarray*}
D &=& \operatorname{Ric} - \eta I_{2n + 1}\\
&=& \operatorname{diag}\left\lbrace \frac{\lambda}{2\sigma_1^2} - \eta, \frac{\lambda}{2\sigma_1^2} - \eta,\ldots, \frac{\lambda}{2\sigma_{n - 1}^2} - \eta, \frac{\lambda}{2\sigma_{n - 1}^2} - \eta, \frac{\lambda}{2} - \eta, \frac{\lambda}{2} - \eta, \delta - \eta\right\rbrace.
\end{eqnarray*}
Since $D$ is a derivation of $\mathfrak{h}_{2n + 1}$ that is diagonal, it obviously satisfies the derivation condition for all vanishing brackets. Next, $D$ satisfies the condition
\begin{eqnarray*}
& [D(v_1), v_2] + [v_1, D(v_2)] = D[v_1, v_2] \\ \Leftrightarrow & \left[ \left(\frac{\lambda}{2\sigma_1^2} - \eta\right)v_1, v_2\right] + \left[ v_1, \left(\frac{\lambda}{2\sigma_1^2} - \eta\right)v_2\right] = \frac{\sqrt{\lambda}}{\sigma_1}D(v_{2n + 1})\\
\Leftrightarrow& \left(\frac{\lambda}{\sigma_1^2} - 2\eta\right)[v_1, v_2] = \frac{\sqrt{\lambda}}{\sigma_1}(\delta - \eta)v_{2n + 1} \\
\Leftrightarrow& \left(\frac{\lambda}{\sigma_1^2} - 2\eta\right)\frac{\sqrt{\lambda}}{\sigma_1}v_{2n + 1} = \frac{\sqrt{\lambda}}{\sigma_1}(\delta - \eta)v_{2n + 1} \\
\Leftrightarrow& \frac{\lambda}{\sigma_1^2} - 2\eta = \delta - \eta \Leftrightarrow \eta = \frac{\lambda}{\sigma_1^2} - \delta.
\end{eqnarray*}
Applying the same reasoning to the other brackets, we obtain that
$$\begin{aligned}
\eta &= \frac{\lambda}{\sigma_i^2} - \delta, \quad i = 1, \ldots, n - 1\\
\eta &= \lambda - \delta.
\end{aligned}$$
If $\sigma_i \neq \sigma_j$, we have $\eta - \eta = 0 = \frac{\lambda}{\sigma_i^2} - \frac{\lambda}{\sigma_j^2}$, which is a contradiction. Since $\eta = \lambda - \delta$, then $\sigma_1 = \ldots = \sigma_{n - 1} = 1$. In this case, we have $\delta = \frac{-n\lambda}{2}$ and $\eta = \lambda - \delta = \frac{(n + 2)\lambda}{2}$. The Ricci operator become
$$\operatorname{Ric} = \operatorname{diag}\left\lbrace \frac{\lambda}{2}, \frac{\lambda}{2},\ldots, \frac{\lambda}{2}, \frac{-n\lambda}{2}\right\rbrace.$$
The algebraic Ricci soliton equation is given by
$$\operatorname{Ric} = \frac{(n + 2)\lambda}{2}I_{2n + 1} + D,$$
where
$$D = \operatorname{diag}\left\lbrace \frac{-(n + 1)\lambda}{2}, \frac{-(n + 1)\lambda}{2},\ldots, \frac{-(n + 1)\lambda}{2}, -(n + 1)\lambda\right\rbrace,$$
is a derivation of $\mathfrak{h}_{2n + 1}$ with respect to the orthonormal basis $\mathscr{B}$.
\end{proof}
\begin{remark}
For $n = 1$, we recover the algebraic Ricci soliton equation for the Lorentzian inner product $\mu$ on the three-dimensional Heisenberg Lie algebra given in \cite{ayad2026lorentzian} by
$$\operatorname{Ric} = \operatorname{diag}\left\lbrace \frac{\lambda}{2}, \frac{\lambda}{2}, \frac{-\lambda}{2}\right\rbrace = \frac{3\lambda}{2}I_3 + \operatorname{diag}\left\lbrace -\lambda, -\lambda, -2\lambda\right\rbrace.$$
\end{remark}
\subsection{The Lorentzian Lie algebra $\left(\mathfrak{h}_{2n + 1}, \nu\right)$}
An orthonormal basis $\mathscr{B} = \left\lbrace v_1, \ldots, v_{2n + 1}\right\rbrace$ of $\left(\mathfrak{h}_{2n + 1}, \nu\right)$ is given by
$$\begin{aligned}
v_1 &= \frac{1}{\sqrt{\sigma_1}}f_1,
&\qquad
v_2 &= \frac{1}{\sqrt{\sigma_1}}g_1, \\[2pt]
v_3 &= \frac{1}{\sqrt{\sigma_2}}f_2,
&\qquad
v_4 &= \frac{1}{\sqrt{\sigma_2}}g_2, \\
&\vdots \\
v_{2n-3} &= \frac{1}{\sqrt{\sigma_{n-1}}}f_{n-1},
&\qquad
v_{2n-2} &= \frac{1}{\sqrt{\sigma_{n-1}}}g_{n-1}, \\[2pt]
v_{2n-1} &= f_n,
&\qquad
v_{2n} &= \frac{1}{\sqrt{\lambda}}z, \\[2pt]
v_{2n+1} &= g_n.
\end{aligned}$$
The bracket in the basis $\mathscr{B}$ is given by
$$
[v_1, v_2] = \frac{\sqrt{\lambda}}{\sigma_1}v_{2n}, \qquad [v_3, v_4] = \frac{\sqrt{\lambda}}{\sigma_2}v_{2n}, \quad \ldots, \quad [v_{2n - 3}, v_{2n - 2}] = \frac{\sqrt{\lambda}}{\sigma_{n - 1}}v_{2n},$$
$$[v_{2n - 1}, v_{2n + 1}] = \sqrt{\lambda}v_{2n}.$$
The structure constants are
$$\begin{aligned}
\xi_{12(2n)} &= -\xi_{21(2n)} = \frac{\sqrt{\lambda}}{\sigma_1},\\
\xi_{34(2n)} &= -\xi_{43(2n)} = \frac{\sqrt{\lambda}}{\sigma_2},\\
&\vdots \\
\xi_{(2n - 3)(2n - 2)(2n)} &= -\xi_{(2n - 2)(2n - 3)(2n)} = \frac{\sqrt{\lambda}}{\sigma_{n - 1}},\\
\xi_{(2n - 1)(2n + 1)(2n)} &= -\xi_{(2n + 1)(2n - 1)(2n)} = \sqrt{\lambda}.
\end{aligned}$$
\begin{theorem}
The nonzero terms in the Levi-Civita connection of $\left(\mathfrak{h}_{2n + 1}, \nu\right)$ are
$$
\begin{aligned}
\nabla_{v_i}v_i &= 0, \qquad i=1,\ldots,2n+1,\\[2mm]
\nabla_{v_1}v_2
&=-\nabla_{v_2}v_1=\frac{\sqrt{\lambda}}{2\sigma_1}v_{2n},\\
\nabla_{v_1}v_{2n}
&=\nabla_{v_{2n}}v_1=\frac{-\sqrt{\lambda}}{2\sigma_1}v_2,\\
\nabla_{v_2}v_{2n}
&=\nabla_{v_{2n}}v_2=\frac{\sqrt{\lambda}}{2\sigma_1}v_1,\\[2mm]
\nabla_{v_3}v_4
&=-\nabla_{v_4}v_3=\frac{\sqrt{\lambda}}{2\sigma_2}v_{2n},\\
\nabla_{v_3}v_{2n}
&=\nabla_{v_{2n}}v_3=\frac{-\sqrt{\lambda}}{2\sigma_2}v_4,\\
\nabla_{v_4}v_{2n}
&=\nabla_{v_{2n}}v_4=\frac{\sqrt{\lambda}}{2\sigma_2}v_3,\\
&\hspace{2cm}\vdots\\[-1mm]
\nabla_{v_{2n-3}}v_{2n-2}
&=-\nabla_{v_{2n-2}}v_{2n-3}=\frac{\sqrt{\lambda}}{2\sigma_{n-1}}v_{2n},\\
\nabla_{v_{2n-3}}v_{2n}
&=\nabla_{v_{2n}}v_{2n-3}=\frac{-\sqrt{\lambda}}{2\sigma_{n-1}}v_{2n-2},\\
\nabla_{v_{2n-2}}v_{2n}
&=\nabla_{v_{2n}}v_{2n-2}=\frac{\sqrt{\lambda}}{2\sigma_{n-1}}v_{2n-3}, \\[2mm]
\nabla_{v_{2n-1}}v_{2n}
&=\nabla_{v_{2n}}v_{2n-1}=\frac{\sqrt{\lambda}}{2}v_{2n+1},\\
\nabla_{v_{2n-1}}v_{2n+1}
&=-\nabla_{v_{2n+1}}v_{2n-1}=\frac{\sqrt{\lambda}}{2}v_{2n},\\
\nabla_{v_{2n}}v_{2n+1}
&=\nabla_{v_{2n+1}}v_{2n}=\frac{\sqrt{\lambda}}{2}v_{2n-1}.
\end{aligned}
$$
\end{theorem}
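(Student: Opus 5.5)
The plan is to apply the Koszul-type formula (\ref{Levi}) directly, exactly as for $\left(\mathfrak{h}_{2n+1},\mu\right)$. The one change is the causal character of the basis. For $\nu$ the timelike vector is now $v_{2n+1}=g_n$, with $\nu(v_{2n+1},v_{2n+1})=-1$. The centre direction $v_{2n}=z/\sqrt{\lambda}$ is spacelike, with $\nu(z,z)=\lambda$. So in (\ref{Levi}) the sign factor $\langle v_k,v_k\rangle$ equals $-1$ only for $k=2n+1$. This is the point where the computation differs from the $\mu$ case, and where sign errors are most likely.

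First, I record that the only nonzero structure constants are those listed, and each has its last index equal to $2n$. Call a term $\xi_{abc}$ in (\ref{Levi}) admissible if $c=2n$ and $\{a,b\}$ is one of the pairs $\{2i-1,2i\}$ for $i\le n-1$, or $\{2n-1,2n+1\}$. Then the coefficient $\frac12\left(\xi_{ijk}-\xi_{jki}+\xi_{kij}\right)$ can be nonzero only if $\{i,j,k\}$ is one of these triples $\{a,b,2n\}$. This gives three cases for each triple:
\begin{itemize}
\item $i,j$ are the pair and $k=2n$; this yields $\nabla_{v_a}v_b=-\nabla_{v_b}v_a$;
\item $i$ is in the pair and $j=2n$;
\item $i=2n$ and $j$ is in the pair.
\end{itemize}
In particular $\nabla_{v_i}v_i=0$ for every $i$, since all three terms vanish when $i=j$.

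For a pair $\{2i-1,2i\}$ with $i\le n-1$, all output vectors are spacelike, so every sign factor is $+1$. One computes, for example,
$$\nabla_{v_1}v_{2n}=\tfrac12\left(\xi_{1(2n)2}-\xi_{(2n)21}+\xi_{21(2n)}\right)v_2=-\tfrac{\sqrt\lambda}{2\sigma_1}v_2,$$
and $\nabla_{v_{2n}}v_1$ gives the same value. The other entries in these blocks follow in the same way.

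For the last triple $\{2n-1,2n+1,2n\}$, each time the output is $v_{2n+1}$ I multiply by $-1$. For instance,
$$\nabla_{v_{2n-1}}v_{2n}=-\tfrac12\left(0-0+\xi_{(2n+1)(2n-1)(2n)}\right)v_{2n+1}=\tfrac{\sqrt\lambda}{2}v_{2n+1}.$$
Similarly, $\nabla_{v_{2n}}v_{2n+1}$ and $\nabla_{v_{2n+1}}v_{2n}$ both come out as $+\frac{\sqrt\lambda}{2}v_{2n-1}$, which explains the sign pattern in the statement. I would check these last-block entries by verifying torsion-freeness, $\nabla_uv-\nabla_vu=[u,v]$, and metric compatibility, $\langle\nabla_uv,w\rangle+\langle v,\nabla_uw\rangle=0$, with the Lorentzian signs. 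For example, $\nabla_{v_{2n-1}}v_{2n+1}-\nabla_{v_{2n+1}}v_{2n-1}=\sqrt\lambda v_{2n}$ must hold. The main obstacle is exactly this bookkeeping of the timelike sign in the last block; the rest is routine.
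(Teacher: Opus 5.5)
Your proposal is correct and follows the same route as the paper, which simply applies formula (\ref{Levi}) to the listed structure constants. Your values agree with the statement, including the sign flip from the timelike $v_{2n+1}$ in the last block (e.g.\ $\nabla_{v_{2n-1}}v_{2n}=\frac{\sqrt{\lambda}}{2}v_{2n+1}$); the admissible-triple argument for which terms vanish and the torsion and metric-compatibility checks spell out what the paper leaves implicit.
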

\begin{proof}
Since the structure constants are well described, the result follows by using the formula (\ref{Levi}).
\end{proof}
\subsubsection{The Ricci operator of $\left(\mathfrak{h}_{2n + 1}, \nu\right)$}
\begin{theorem}
The Ricci operator of $\left(\mathfrak{h}_{2n + 1}, \nu\right)$ is represented in the basis $\mathscr{B}$ by the following diagonal matrix
$$\operatorname{Ric} = \operatorname{diag}\left\lbrace \frac{-\lambda}{2\sigma_1^2}, \frac{-\lambda}{2\sigma_1^2}, \frac{-\lambda}{2\sigma_2^2}, \frac{-\lambda}{2\sigma_2^2}, \ldots, \frac{-\lambda}{2\sigma_{n - 1}^2}, \frac{-\lambda}{2\sigma_{n - 1}^2}, \frac{\lambda}{2}, \theta, \frac{\lambda}{2}\right\rbrace,$$
where $\theta$ is given by $\theta = \left(\displaystyle{\sum_{k = 1}^{n - 1}\frac{\lambda}{2\sigma_k^2}}\right) - \frac{\lambda}{2}$.
\end{theorem}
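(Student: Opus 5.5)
We compute the Ricci operator of $(\mathfrak{h}_{2n+1},\nu)$ the same way as in Theorem \ref{ric}, using the Levi-Civita connection of $(\mathfrak{h}_{2n+1},\nu)$ from the preceding theorem. Since $\nabla_{v_k}v_k=0$ for every $k$, the curvature term reduces to
$$R_{v_ku}v_k=\nabla_{[v_k,u]}v_k-\nabla_{v_k}\nabla_u v_k .$$
Hence
$$\operatorname{Ric}(u)=\sum_{k}\nu(v_k,v_k)\bigl(\nabla_{[v_k,u]}v_k-\nabla_{v_k}\nabla_u v_k\bigr),$$
where now the timelike vector is $v_{2n+1}=g_n$, with $\nu(v_{2n+1},v_{2n+1})=-1$. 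The center direction $v_{2n}$ is spacelike. The orthonormal pairs are $\{v_{2i-1},v_{2i}\}$ for $i\le n-1$ and $\{v_{2n-1},v_{2n+1}\}$. Each basis vector only interacts with its partner and with $v_{2n}$, so every $\operatorname{Ric}(v_j)$ is a multiple of $v_j$, and the matrix is diagonal.

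\textbf{Step 1: a spacelike $v_1$ (and by analogy $v_{2i-1},v_{2i}$).} The only nonzero contributions come from $k=2$ and $k=2n$. For $k=2$ we get
$$\nabla_{[v_2,v_1]}v_2-\nabla_{v_2}\nabla_{v_1}v_2=-\tfrac{\sqrt\lambda}{\sigma_1}\nabla_{v_{2n}}v_2-\tfrac{\sqrt\lambda}{2\sigma_1}\nabla_{v_2}v_{2n}=-\tfrac{3\lambda}{4\sigma_1^2}v_1 .$$
For $k=2n$ we get
$$-\nabla_{v_{2n}}\nabla_{v_1}v_{2n}=\tfrac{\sqrt\lambda}{2\sigma_1}\nabla_{v_{2n}}v_2=\tfrac{\lambda}{4\sigma_1^2}v_1 .$$
Both vectors are spacelike, so both enter with sign $+1$, giving $-\tfrac{\lambda}{2\sigma_1^2}$. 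The sign flip relative to the $\mu$ case comes from $z$ now being spacelike, which reverses the signs in the connection.

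\textbf{Step 2: the last pair $v_{2n-1},v_{2n+1}$.} For $v_{2n-1}$, the contributing indices are $k=2n+1$ (sign $-1$) and $k=2n$. The first term is
$$\nabla_{[v_{2n+1},v_{2n-1}]}v_{2n+1}-\nabla_{v_{2n+1}}\nabla_{v_{2n-1}}v_{2n+1}=-\sqrt\lambda\,\nabla_{v_{2n}}v_{2n+1}-\tfrac{\sqrt\lambda}{2}\nabla_{v_{2n+1}}v_{2n}=-\tfrac{3\lambda}{4}v_{2n-1} .$$
With the $-1$ sign this gives $+\tfrac{3\lambda}{4}$. The second term is
$$-\nabla_{v_{2n}}\nabla_{v_{2n-1}}v_{2n}=-\tfrac{\sqrt\lambda}{2}\nabla_{v_{2n}}v_{2n+1}=-\tfrac{\lambda}{4}v_{2n-1} .$$
The total is $\tfrac{\lambda}{2}$. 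The entry for $v_{2n+1}$ is computed the same way, and I expect it also to equal $\tfrac{\lambda}{2}$ once the $-1$ factor on the timelike direction is applied consistently.

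\textbf{Step 3: the center $v_{2n}$.} Here $[v_k,v_{2n}]=0$, so
$$\operatorname{Ric}(v_{2n})=-\sum_k\nu(v_k,v_k)\nabla_{v_k}\nabla_{v_{2n}}v_k .$$
For each $i\le n-1$, the pair $v_{2i-1},v_{2i}$ contributes $-2\cdot(-\tfrac{\lambda}{4\sigma_i^2})=\tfrac{\lambda}{2\sigma_i^2}$. For example,
$$\nabla_{v_1}\nabla_{v_{2n}}v_1=-\tfrac{\sqrt\lambda}{2\sigma_1}\nabla_{v_1}v_2=-\tfrac{\lambda}{4\sigma_1^2}v_{2n}.$$
For the last pair:
$$\nabla_{v_{2n-1}}\nabla_{v_{2n}}v_{2n-1}=\tfrac{\lambda}{4}v_{2n},\qquad \nabla_{v_{2n+1}}\nabla_{v_{2n}}v_{2n+1}=\tfrac{\sqrt\lambda}{2}\nabla_{v_{2n+1}}v_{2n-1}=-\tfrac{\lambda}{4}v_{2n}.$$
The latter carries sign $-1$, so the pair contributes $-(\tfrac{\lambda}{4}+\tfrac{\lambda}{4})=-\tfrac{\lambda}{2}$. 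Summing gives $\theta$.

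\textbf{Main obstacle.} The delicate part is bookkeeping the signs. The timelike vector is no longer the center, so the factor $\nu(v_{2n+1},v_{2n+1})=-1$ and the mixed-sign connection terms $\nabla_{v_{2n-1}}v_{2n+1}=-\nabla_{v_{2n+1}}v_{2n-1}$ must be handled carefully. The entries for $v_{2n-1}$, $v_{2n+1}$ and the last-pair contribution to $\theta$ are where an error would most likely occur, so I would double-check them by verifying the symmetry of $\rho$.
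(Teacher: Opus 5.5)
Your proposal is correct and follows the same route as the paper: you use $\nabla_{v_k}v_k=0$ to reduce each term to $\nabla_{[v_k,u]}v_k-\nabla_{v_k}\nabla_u v_k$ and evaluate $\sum_k \nu(v_k,v_k)\bigl(\nabla_{[v_k,u]}v_k-\nabla_{v_k}\nabla_u v_k\bigr)$ entry by entry, and your Step 3 for $\operatorname{Ric}(v_{2n})$ matches term by term the only computation the paper writes out. The entry you left as ``expected'' does check out: $R_{v_{2n-1}v_{2n+1}}v_{2n-1}=\tfrac{3\lambda}{4}v_{2n+1}$ and $R_{v_{2n}v_{2n+1}}v_{2n}=-\tfrac{\lambda}{4}v_{2n+1}$ give $\operatorname{Ric}(v_{2n+1})=\tfrac{\lambda}{2}v_{2n+1}$, but both contributing indices are spacelike, so the timelike weight $-1$ never enters this entry as a summation sign; the Lorentzian signature affects it only through the connection coefficients.
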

\begin{proof}
The proof is similar to that of theorem \ref{ric}, the only Ricci curvature that needs an explicit clarification is the following
\begin{eqnarray*}
\operatorname{Ric}(v_{2n}) &=& \displaystyle{\sum_{k = 1}^{2n + 1} \nu(v_k, v_k) \left( -\nabla_{v_k}\nabla_{v_{2n}}v_k\right)}\\
&=& -\displaystyle{\sum_{k = 1}^{2n - 2}\nabla_{v_k}\nabla_{v_{2n}}v_k} - \nabla_{v_{2n - 1}}\nabla_{v_{2n}}v_{2n - 1} + \nabla_{v_{2n + 1}}\nabla_{v_{2n}}v_{2n + 1}.
\end{eqnarray*}
One can see that 
$$\begin{aligned}
\nabla_{v_1}\nabla_{v_{2n}}v_1 &= \frac{-\sqrt{\lambda}}{2\sigma_1}\nabla_{v_1}v_2 = \frac{-\lambda}{4\sigma_1^2}v_{2n},\\
\nabla_{v_2}\nabla_{v_{2n}}v_2 &= \frac{\sqrt{\lambda}}{2\sigma_1}\nabla_{v_2}v_1 = \frac{-\lambda}{4\sigma_1^2}v_{2n},\\
&\vdots \\
\nabla_{v_{2n - 1}}\nabla_{v_{2n}}v_{2n - 1} &= \frac{\sqrt{\lambda}}{2}\nabla_{v_{2n - 1}}v_{2n + 1} = \frac{\lambda}{4}v_{2n},\\
\nabla_{v_{2n + 1}}\nabla_{v_{2n}}v_{2n + 1} &= \frac{\sqrt{\lambda}}{2}\nabla_{v_{2n + 1}}v_{2n - 1} = \frac{-\lambda}{4}v_{2n}.
\end{aligned}$$
Therefore
\begin{eqnarray*}
\operatorname{Ric}(v_{2n}) &=& -\left(\displaystyle{\sum_{k = 1}^{n - 1}\frac{-\lambda}{2\sigma_k^2}}\right)v_{2n} - \frac{\lambda}{2}v_{2n}\\
&=& \left(\left(\displaystyle{\sum_{k = 1}^{n - 1}\frac{\lambda}{2\sigma_k^2}}\right) - \frac{\lambda}{2}\right)v_{2n} = \theta v_{2n}.
\end{eqnarray*}
\end{proof}
\begin{remark}
For $n = 1$, we recover the Ricci operator of the Lorentzian inner product $\nu$ on the three-dimensional Heisenberg Lie algebra described (up to permutation) in \cite{ayad2026lorentzian} by
$$\operatorname{Ric} = \operatorname{diag}\left\lbrace \frac{\lambda}{2}, \frac{-\lambda}{2}, \frac{\lambda}{2}\right\rbrace.$$
\end{remark}
\begin{theorem}
The Lorentzian inner product $\nu$ is an algebraic Ricci soliton only for $n = 1$. In this case, $\nu$ is given by $\nu = \operatorname{diag}\left\lbrace 1, -1, \lambda\right\rbrace$. Its algebraic Ricci soliton equation is given by (we recover the result in \cite{ayad2026lorentzian})
$$\operatorname{Ric} = \operatorname{diag}\left\lbrace \frac{\lambda}{2}, \frac{-\lambda}{2}, \frac{\lambda}{2}\right\rbrace = \frac{3\lambda}{2}I_3 + \operatorname{diag}\left\lbrace -\lambda, -2\lambda, -\lambda\right\rbrace,$$
where $D = \operatorname{diag}\left\lbrace -\lambda, -2\lambda, -\lambda\right\rbrace$ is a derivation of $\mathfrak{h}_3$ with respect to the orthonormal basis $\left\lbrace v_1 = f_1, v_2 = \frac{1}{\sqrt{\lambda}}z, v_3 = g_1\right\rbrace$.
\end{theorem}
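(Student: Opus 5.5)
The plan is to follow the same scheme as in the proof for $\mu$. Suppose $\nu$ is an algebraic Ricci soliton, so that $\operatorname{Ric} = \eta I_{2n+1} + D$ for some $\eta \in \mathbb{R}$ and some derivation $D$. By the preceding theorem, $\operatorname{Ric}$ is diagonal in $\mathscr{B}$. Hence $D = \operatorname{Ric} - \eta I_{2n+1}$ is also diagonal, and no a priori assumption on the form of $D$ is needed. Write $a_i = -\frac{\lambda}{2\sigma_i^2}$ for $i = 1,\ldots,n-1$. Then
$$D = \operatorname{diag}\left\lbrace a_1 - \eta, a_1 - \eta, \ldots, a_{n-1} - \eta, a_{n-1} - \eta, \tfrac{\lambda}{2} - \eta, \theta - \eta, \tfrac{\lambda}{2} - \eta\right\rbrace.$$
A diagonal endomorphism automatically satisfies the derivation identity on every pair with vanishing bracket. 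It therefore suffices to impose the identity on the $n$ nonzero brackets, all of which take values in $\mathbb{R}v_{2n}$.

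The first step uses the brackets $[v_{2i-1}, v_{2i}] = \frac{\sqrt{\lambda}}{\sigma_i}v_{2n}$ for $i \le n-1$. Cancelling the nonzero factor $\frac{\sqrt{\lambda}}{\sigma_i}$, the derivation condition becomes $2(a_i - \eta) = \theta - \eta$, that is,
$$\eta = 2a_i - \theta = -\frac{\lambda}{\sigma_i^2} - \theta.$$
The second step uses the last bracket $[v_{2n-1}, v_{2n+1}] = \sqrt{\lambda}\, v_{2n}$. Since both $v_{2n-1}$ and $v_{2n+1}$ carry the eigenvalue $\frac{\lambda}{2}$, this gives $\lambda - 2\eta = \theta - \eta$, that is, $\eta = \lambda - \theta$.

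The key step is comparing the two expressions for $\eta$. If $n \ge 2$, at least one index $i$ exists, and equating gives $-\frac{\lambda}{\sigma_i^2} = \lambda$. This is impossible because $\lambda > 0$, so no $\eta$ works and $\nu$ is not an algebraic Ricci soliton. This is the only real point of the argument: the sign reversal of the Ricci eigenvalues on the spacelike pairs, relative to the $\mu$ case, is what produces the contradiction. For $\mu$ the analogous comparison was merely a constraint forcing $\sigma_i = 1$.

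For $n = 1$ only the last bracket is present, and the sum defining $\theta$ is empty, so $\theta = -\frac{\lambda}{2}$. Then $\eta = \lambda - \theta = \frac{3\lambda}{2}$ and $D = \operatorname{diag}\{-\lambda, -2\lambda, -\lambda\}$ in the basis $\{f_1, \frac{1}{\sqrt{\lambda}}z, g_1\}$. The final check is direct: $[Dv_1, v_3] + [v_1, Dv_3] = -2\lambda\sqrt{\lambda}\,v_2 = D[v_1, v_3]$, so $D$ is a derivation. Since $\eta = \frac{3\lambda}{2} > 0$, the soliton is shrinking, which recovers the stated equation and the result of \cite{ayad2026lorentzian}.
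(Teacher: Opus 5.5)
Your proposal is correct and follows essentially the same route as the paper: write $D=\operatorname{Ric}-\eta I_{2n+1}$, impose the derivation identity on the $n$ nonzero brackets to obtain $\eta=-\frac{\lambda}{\sigma_i^2}-\theta$ and $\eta=\lambda-\theta$, derive the impossible equality $-\frac{\lambda}{\sigma_i^2}=\lambda$ for $n\ge 2$, and for $n=1$ use $\theta=-\frac{\lambda}{2}$ to get $\eta=\frac{3\lambda}{2}$. Your version is slightly cleaner than the paper's, since it skips the superfluous $\sigma_i\neq\sigma_j$ comparison and explicitly checks the derivation identity in the case $n=1$.
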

\begin{proof}
Assume that $\nu$ is an algebraic Ricci soliton, then there exists $\eta \in \mathbb{R}$ and a derivation $D$ of $\mathfrak{h}_{2n + 1}$ such that
$$\operatorname{Ric} = \eta I_{2n + 1} + D.$$
From this, it follows that
\begin{eqnarray*}
D &=& \operatorname{Ric} - \eta I_{2n + 1}\\
&=& \operatorname{diag}\left\lbrace \frac{-\lambda}{2\sigma_1^2} - \eta, \frac{-\lambda}{2\sigma_1^2} - \eta,\ldots, \frac{-\lambda}{2\sigma_{n - 1}^2} - \eta, \frac{-\lambda}{2\sigma_{n - 1}^2} - \eta, \frac{\lambda}{2} - \eta, \theta - \eta, \frac{\lambda}{2} - \eta\right\rbrace.
\end{eqnarray*}
Since $D$ is a derivation of $\mathfrak{h}_{2n + 1}$ that is diagonal, it obviously satisfies the derivation condition for all vanishing brackets. Next, $D$ satisfies the condition
\begin{eqnarray*}
& [D(v_1), v_2] + [v_1, D(v_2)] = D[v_1, v_2] \\ \Leftrightarrow & \left[ \left(\frac{-\lambda}{2\sigma_1^2} - \eta\right)v_1, v_2\right] + \left[ v_1, \left(\frac{-\lambda}{2\sigma_1^2} - \eta\right)v_2\right] = \frac{\sqrt{\lambda}}{\sigma_1}D(v_{2n})\\
\Leftrightarrow& \left(\frac{-\lambda}{\sigma_1^2} - 2\eta\right)[v_1, v_2] = \frac{\sqrt{\lambda}}{\sigma_1}(\theta - \eta)v_{2n} \\
\Leftrightarrow& \left(\frac{-\lambda}{\sigma_1^2} - 2\eta\right)\frac{\sqrt{\lambda}}{\sigma_1}v_{2n} = \frac{\sqrt{\lambda}}{\sigma_1}(\theta - \eta)v_{2n} \\
\Leftrightarrow& \frac{-\lambda}{\sigma_1^2} - 2\eta = \theta - \eta \Leftrightarrow \eta = \frac{-\lambda}{\sigma_1^2} - \theta.
\end{eqnarray*}
Applying the same reasoning to the other brackets, we obtain that
$$\begin{aligned}
\eta &= \frac{-\lambda}{\sigma_i^2} - \theta, \quad i = 1, \ldots, n - 1\\
\eta &= \lambda - \theta.
\end{aligned}$$
If $\sigma_i \neq \sigma_j$, we have $\eta - \eta = 0 = \frac{-\lambda}{\sigma_i^2} + \frac{\lambda}{\sigma_j^2}$, which is a contradiction. Next, we have $\eta - \eta = 0 = \frac{-\lambda}{\sigma_i^2} - \lambda$, which is a contradiction. For $n = 1$, we have $\theta = \frac{-\lambda}{2}$. We do not have the first conditions on $\eta$, so the only condition on $\eta$ is 
$$\eta = \lambda - \theta = \lambda + \frac{\lambda}{2} = \frac{3\lambda}{2}.$$
The algebraic Ricci soliton equation is given by 
$$\operatorname{Ric} = \operatorname{diag}\left\lbrace \frac{\lambda}{2}, \frac{-\lambda}{2}, \frac{\lambda}{2}\right\rbrace = \frac{3\lambda}{2}I_3 + \operatorname{diag}\left\lbrace -\lambda, -2\lambda, -\lambda\right\rbrace,$$
where $D = \operatorname{diag}\left\lbrace -\lambda, -2\lambda, -\lambda\right\rbrace$ is a derivation of $\mathfrak{h}_3$ with respect to the orthonormal basis $\left\lbrace v_1 = f_1, v_2 = \frac{1}{\sqrt{\lambda}}z, v_3 = g_1\right\rbrace$.
\end{proof}
\subsection{The Lorentzian Lie algebra $\left(\mathfrak{h}_{2n + 1}, \phi\right)$}
An orthonormal basis $\mathscr{B} = \left\lbrace v_1, \ldots, v_{2n + 1}\right\rbrace$ of $\left(\mathfrak{h}_{2n + 1}, \phi\right)$ is given by
$$\begin{aligned}
v_1 &= \frac{1}{\sqrt{\sigma_1}}f_1,
&\qquad
v_2 &= \frac{1}{\sqrt{\sigma_1}}g_1, \\[2pt]
v_3 &= \frac{1}{\sqrt{\sigma_2}}f_2,
&\qquad
v_4 &= \frac{1}{\sqrt{\sigma_2}}g_2, \\
&\vdots \\
v_{2n-3} &= \frac{1}{\sqrt{\sigma_{n-1}}}f_{n-1},
&\qquad
v_{2n-2} &= \frac{1}{\sqrt{\sigma_{n-1}}}g_{n-1}, \\[2pt]
v_{2n-1} &= f_n,
&\qquad
v_{2n} &= \frac{1}{\sqrt{2}}(g_n + z), \\[2pt]
v_{2n+1} &= \frac{1}{\sqrt{2}}(g_n - z).
\end{aligned}$$
Note that: $v_{2n} - v_{2n + 1} = \sqrt{2}z \Longrightarrow z = \frac{1}{\sqrt{2}}(v_{2n} - v_{2n + 1})$. The bracket in the basis $\mathscr{B}$ is given by
$$
[v_1, v_2] = \frac{1}{\sqrt{2}\sigma_1}(v_{2n} - v_{2n + 1}), \quad [v_3, v_4] = \frac{1}{\sqrt{2}\sigma_2}(v_{2n} - v_{2n + 1}), \; \ldots,$$
$$[v_{2n - 3}, v_{2n - 2}] = \frac{1}{\sqrt{2}\sigma_{n - 1}}(v_{2n} - v_{2n + 1}),$$
$$[v_{2n - 1}, v_{2n}] = \frac{1}{2}(v_{2n} - v_{2n + 1}), \quad [v_{2n - 1}, v_{2n + 1}] = \frac{1}{2}(v_{2n} - v_{2n + 1}).
$$
The structure constants are
$$\begin{aligned}
\xi_{12(2n)} &= -\xi_{21(2n)} = \frac{1}{\sqrt{2}\sigma_1},\\
\xi_{12(2n + 1)} &= -\xi_{21(2n + 1)} = \frac{1}{\sqrt{2}\sigma_1},\\
\xi_{34(2n)} &= -\xi_{43(2n)} = \frac{1}{\sqrt{2}\sigma_2},\\
\xi_{34(2n + 1)} &= -\xi_{43(2n + 1)} = \frac{1}{\sqrt{2}\sigma_2},\\
&\vdots \\
\xi_{(2n - 3)(2n - 2)(2n)} &= -\xi_{(2n - 2)(2n - 3)(2n)} = \frac{1}{\sqrt{2}\sigma_{n - 1}},\\
\xi_{(2n - 3)(2n - 2)(2n + 1)} &= -\xi_{(2n - 2)(2n - 3)(2n + 1)} = \frac{1}{\sqrt{2}\sigma_{n - 1}},\\
\xi_{(2n - 1)(2n)(2n)} &= -\xi_{(2n)(2n - 1)(2n)} = \frac{1}{2},\\
\xi_{(2n - 1)(2n)(2n + 1)} &= -\xi_{(2n)(2n - 1)(2n + 1)} = \frac{1}{2},\\
\xi_{(2n - 1)(2n + 1)(2n)} &= -\xi_{(2n + 1)(2n - 1)(2n)} = \frac{1}{2},\\
\xi_{(2n - 1)(2n + 1)(2n + 1)} &= -\xi_{(2n + 1)(2n - 1)(2n + 1)} = \frac{1}{2}.
\end{aligned}$$
\begin{theorem}\label{civita}
The nonzero terms in the Levi-Civita connection of $\left(\mathfrak{h}_{2n + 1}, \phi\right)$ are
$$
\begin{aligned}
\nabla_{v_i}v_i &= 0, \qquad i=1,\ldots,2n - 1,\\
\nabla_{v_{2n}}v_{2n} &= \nabla_{v_{2n + 1}}v_{2n + 1} = \frac{1}{2}v_{2n - 1},\\[2mm]
\nabla_{v_1}v_2
&=-\nabla_{v_2}v_1=\frac{1}{2\sqrt{2}\sigma_1}v_{2n} - \frac{1}{2\sqrt{2}\sigma_1}v_{2n + 1},\\
\nabla_{v_1}v_{2n}
&=\nabla_{v_{2n}}v_1=\frac{-1}{2\sqrt{2}\sigma_1}v_2,\\
\nabla_{v_2}v_{2n}
&=\nabla_{v_{2n}}v_2=\frac{1}{2\sqrt{2}\sigma_1}v_1,\\
\nabla_{v_1}v_{2n + 1}
&=\nabla_{v_{2n + 1}}v_1=\frac{-1}{2\sqrt{2}\sigma_1}v_2,\\
\nabla_{v_2}v_{2n + 1}
&=\nabla_{v_{2n + 1}}v_2=\frac{1}{2\sqrt{2}\sigma_1}v_1,\\[2mm]
\nabla_{v_3}v_4
&=-\nabla_{v_4}v_3=\frac{1}{2\sqrt{2}\sigma_2}v_{2n} - \frac{1}{2\sqrt{2}\sigma_2}v_{2n + 1},\\
\nabla_{v_3}v_{2n}
&=\nabla_{v_{2n}}v_3=\frac{-1}{2\sqrt{2}\sigma_2}v_4,\\
\nabla_{v_4}v_{2n}
&=\nabla_{v_{2n}}v_4=\frac{1}{2\sqrt{2}\sigma_2}v_3,\\
\nabla_{v_3}v_{2n + 1}
&=\nabla_{v_{2n + 1}}v_3=\frac{-1}{2\sqrt{2}\sigma_2}v_4,\\
\nabla_{v_4}v_{2n + 1}
&=\nabla_{v_{2n + 1}}v_4=\frac{1}{2\sqrt{2}\sigma_2}v_3,\\
&\hspace{2cm}\vdots\\[-1mm]
\nabla_{v_{2n - 3}}v_{2n - 2}
&=-\nabla_{v_{2n - 2}}v_{2n - 3}=\frac{1}{2\sqrt{2}\sigma_{n - 1}}v_{2n} - \frac{1}{2\sqrt{2}\sigma_{n - 1}}v_{2n + 1},\\
\nabla_{v_{2n - 3}}v_{2n}
&=\nabla_{v_{2n}}v_{2n - 3}=\frac{-1}{2\sqrt{2}\sigma_{n - 1}}v_{2n - 2},\\
\nabla_{v_{2n - 2}}v_{2n}
&=\nabla_{v_{2n}}v_{2n - 2}=\frac{1}{2\sqrt{2}\sigma_{n - 1}}v_{2n - 3},\\
\nabla_{v_{2n - 3}}v_{2n + 1}
&=\nabla_{v_{2n + 1}}v_{2n - 3}=\frac{-1}{2\sqrt{2}\sigma_{n - 1}}v_{2n - 2},\\
\nabla_{v_{2n - 2}}v_{2n + 1}
&=\nabla_{v_{2n + 1}}v_{2n - 2}=\frac{1}{2\sqrt{2}\sigma_{n - 1}}v_{2n - 3},\\[2mm]
\nabla_{v_{2n-1}}v_{2n}
&=\nabla_{v_{2n - 1}}v_{2n+1}=0,\\
\nabla_{v_{2n}}v_{2n-1}
&=\nabla_{v_{2n+1}}v_{2n-1}=\frac{-1}{2}v_{2n} + \frac{1}{2}v_{2n + 1},\\
\nabla_{v_{2n}}v_{2n+1}
&=\nabla_{v_{2n+1}}v_{2n}=\frac{1}{2}v_{2n-1}.
\end{aligned}
$$
\end{theorem}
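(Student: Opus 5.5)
The plan is to apply formula (\ref{Levi}) directly in the orthonormal basis $\mathscr{B}$, using the structure constants $\xi_{ijk}$ listed above and the signature $\langle v_k,v_k\rangle=1$ for $k\le 2n$, $\langle v_{2n+1},v_{2n+1}\rangle=-1$. The first step is a quick sanity check that $\mathscr{B}$ really is orthonormal for $\phi$. We have $\phi(v_{2n},v_{2n})=\tfrac12\bigl(\phi(g_n,g_n)+2\phi(g_n,z)+\phi(z,z)\bigr)=1$ and $\phi(v_{2n+1},v_{2n+1})=-1$, with $\phi(v_{2n},v_{2n+1})=0$. We also confirm the listed $\xi_{ijk}$ from $z=\tfrac1{\sqrt2}(v_{2n}-v_{2n+1})$. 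Note that the minus sign in $z$ and the timelike sign combine so that every $\xi_{ij(2n+1)}$ equals $\xi_{ij(2n)}$. For instance, $\xi_{12(2n+1)}=\tfrac{1}{\sqrt2\sigma_1}\langle v_{2n}-v_{2n+1},v_{2n+1}\rangle=\tfrac{1}{\sqrt2\sigma_1}$.

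Next I will organize the computation by blocks. The only nonzero $\xi_{ijk}$ have $\{i,j\}$ either a pair $\{2l-1,2l\}$ with $l\le n-1$, or $\{2n-1,2n\}$, or $\{2n-1,2n+1\}$, and $k\in\{2n,2n+1\}$. So for each ordered pair $(v_i,v_j)$ only the terms with $k$ in a small set survive.

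For the $\sigma_l$-blocks, the computation is identical in shape to the $\nu$ case. $\nabla_{v_{2l-1}}v_{2l}$ picks up $\tfrac12\xi_{(2l-1)(2l)k}$ for $k=2n,2n+1$, with the sign $-1$ on $v_{2n+1}$. The mixed terms $\nabla_{v_{2l-1}}v_{2n}$ and $\nabla_{v_{2n}}v_{2l-1}$ pick up only $k=2l$, with coefficient $\tfrac12(-\xi_{(2n)(2l)(2l-1)}+\xi_{(2l)(2l-1)(2n)})=-\tfrac{1}{2\sqrt2\sigma_l}$. The analogous terms with $v_{2n+1}$ work the same way.

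For the last block $\{v_{2n-1},v_{2n},v_{2n+1}\}$ I will evaluate the nine combinations one by one. Here the diagonal terms are no longer zero: for example $\nabla_{v_{2n}}v_{2n}=\sum_k\langle v_k,v_k\rangle\,\xi_{k(2n)(2n)}v_k=\tfrac12 v_{2n-1}$. Similarly $\nabla_{v_{2n+1}}v_{2n+1}=-\xi_{(2n-1)(2n+1)(2n+1)}\cdot(-1)\,v_{2n-1}$ needs care with the double sign. Likewise $\nabla_{v_{2n-1}}v_{2n}$ should vanish, because the two contributions $\tfrac12(\xi_{(2n-1)(2n)k}+\xi_{k(2n-1)(2n)})$ cancel for $k=2n$ and $k=2n+1$. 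The term $\nabla_{v_{2n}}v_{2n-1}$ does not vanish, and its value is forced by torsion-freeness.

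The main obstacle is sign bookkeeping in this last block. There the bracket is not "orthogonal" to its arguments, since $[v_{2n-1},v_{2n}]$ has a $v_{2n}$ component, and the timelike sign enters both through $\langle v_k,v_k\rangle$ and through the $\xi$'s. To guard against errors, I will verify the final table with two independent identities. The first is torsion-freeness, $\nabla_uv-\nabla_vu=[u,v]$; e.g.\ $\nabla_{v_{2n-1}}v_{2n}-\nabla_{v_{2n}}v_{2n-1}=\tfrac12(v_{2n}-v_{2n+1})$. The second is metric compatibility, i.e.\ skew-adjointness of each $\nabla_{v_i}$ with respect to $\phi$, checked on the pairs $(v_{2n},v_{2n+1})$ and $(v_{2n-1},v_{2n})$. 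Since these two properties characterize the Levi-Civita connection, passing them confirms the listed values.
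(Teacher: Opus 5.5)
Your proposal is correct and is essentially the paper's own argument, which simply applies formula (\ref{Levi}) to the listed structure constants. Your block-by-block bookkeeping checks out, with two small remarks. In the last block, the coefficient of $\nabla_{v_{2n+1}}v_{2n+1}$ is just $\langle v_{2n-1},v_{2n-1}\rangle\,\xi_{(2n-1)(2n+1)(2n+1)}=\tfrac12$; the two minus signs you insert there are spurious, though they cancel and give the right value. The torsion-free/skew-adjointness cross-check is a good safeguard, but it only certifies the table via uniqueness if you run it on all pairs, not just the few you name.
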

\begin{proof}
Using the structure constants described above, we obtain the result by applying formula (\ref{Levi}).
\end{proof}
\subsubsection{The Ricci operator of $\left(\mathfrak{h}_{2n + 1}, \phi\right)$}
\begin{theorem}
The Ricci operator of $\left(\mathfrak{h}_{2n + 1}, \phi\right)$ is represented in the basis $\mathscr{B}$ by the following matrix
$$\operatorname{Ric} = \begin{bmatrix}
 & 0 &  &  &  &  &  \\
 & & 0 &  &  &  & \\
 & &  & \ddots &  & \\
 & &  &  & 0 &  &  & \\
 & &  &  &   & \alpha & \alpha\\
 & &  &  &   & -\alpha & -\alpha
\end{bmatrix},$$
where $\alpha$ is given by: $\alpha = \displaystyle{\sum_{k = 1}^{n - 1}\frac{1}{4\sigma_k^2}}$.
\end{theorem}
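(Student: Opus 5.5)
We compute $\operatorname{Ric}(u)=\sum_{k=1}^{2n+1}\phi(v_k,v_k)R_{v_ku}v_k$ for each basis vector $u=v_i$, using the Levi-Civita connection of Theorem \ref{civita}. Unlike the $\mu$ and $\nu$ cases, $\nabla_{v_k}v_k$ does not vanish for $k=2n,2n+1$. So we keep the full expression
$$R_{v_ku}v_k=\nabla_{[v_k,u]}v_k-\nabla_{v_k}\nabla_u v_k+\nabla_u\nabla_{v_k}v_k .$$
The weights are $\phi(v_k,v_k)=1$ for $k\le 2n$ and $-1$ for $k=2n+1$. This lets us pair the terms for $k=2n$ and $k=2n+1$ as a difference.

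A useful structural observation organizes the whole computation. Set $N=v_{2n}-v_{2n+1}=\sqrt2 z$, which is null and central. From Theorem \ref{civita}:
\begin{itemize}
\item $\nabla_{v_{2n}}$ and $\nabla_{v_{2n+1}}$ act identically on $v_1,\dots,v_{2n-1}$;
\item $\nabla_{v_{2n}}v_{2n}=\nabla_{v_{2n+1}}v_{2n+1}=\nabla_{v_{2n}}v_{2n+1}=\nabla_{v_{2n+1}}v_{2n}=\tfrac12 v_{2n-1}$.
\end{itemize}
Hence $\nabla_N\equiv 0$ on $\mathfrak h_{2n+1}$. Likewise, $\nabla_X v_{2n}=\nabla_X v_{2n+1}$ for every $X$, so $\nabla_X N=0$. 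Thus $N$ is a parallel null vector, and $R(\cdot,\cdot)N=0$ and $R_{N\,\cdot}=0$. This explains the expected shape of the matrix: $\operatorname{Ric}$ kills $N$, so the last two columns coincide. Also $\rho(X,N)=0$, so the image of $\operatorname{Ric}$ lies in $N^\perp$.

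Plan of steps, in order:
\begin{enumerate}
\item For $u=v_{2i-1},v_{2i}$ with $i\le n-1$, compute the contributions of $k=2i$ (or $2i-1$), $k=2n$ and $k=2n+1$. These mimic the $\mu$ computation, with $v_{2n+1}$ replaced by the two vectors $v_{2n},v_{2n+1}$.
 \begin{itemize}
 \item The term with $k=2i$ gives $\frac{3}{4\sigma_i^2}\cdot\frac12\big(\langle\text{coefficient}\rangle\big)$. Concretely, $[v_{2i},v_{2i-1}]=-\frac{1}{\sqrt2\sigma_i}N$ and $\nabla_N=0$, so the bracket term vanishes. What remains is $-\nabla_{v_{2i}}\nabla_{v_{2i-1}}v_{2i}$, which is proportional to $\nabla_{v_{2i}}N=0$. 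So it vanishes too.
 \item The $k=2n$ and $k=2n+1$ terms are equal, and they cancel through the sign $-1$.
 \end{itemize}
 We therefore expect $\operatorname{Ric}(v_j)=0$ for $j\le 2n-2$. The $k=2n-1$ term also vanishes by the block structure.
\item For $u=v_{2n-1}$, use $\nabla_{v_{2n-1}}v_{2n}=\nabla_{v_{2n-1}}v_{2n+1}=0$ and the brackets $[v_{2n},v_{2n-1}]=[v_{2n+1},v_{2n-1}]=-\frac12N$. Again the $k=2n$ and $k=2n+1$ contributions cancel, and the other $k$ give zero, so $\operatorname{Ric}(v_{2n-1})=0$.
\item For $u=v_{2n}$, write $v_{2n}=v_{2n+1}+N$. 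Since $\operatorname{Ric}(N)=0$, it suffices to compute $\operatorname{Ric}(v_{2n})$ and check $\operatorname{Ric}(v_{2n+1})=\operatorname{Ric}(v_{2n})$.
 \begin{itemize}
 \item The $k=2n-1$ term cancels against the $k=2n,2n+1$ pair, as in the $n=1$ flat case.
 \item For each pair $k=2i-1,2i$ with $i\le n-1$, compute $-\nabla_{v_k}\nabla_{v_{2n}}v_k+\nabla_{[v_k,v_{2n}]}v_k$, using $[v_k,v_{2n}]=0$. For example, $\nabla_{v_1}\nabla_{v_{2n}}v_1=-\frac{1}{2\sqrt2\sigma_1}\nabla_{v_1}v_2=-\frac{1}{8\sigma_1^2}N$, and similarly for $v_2$.
 \item Summing gives $\operatorname{Ric}(v_{2n})=\sum_k\frac{1}{4\sigma_k^2}N=\alpha(v_{2n}-v_{2n+1})$.
 \end{itemize}
 This is exactly the stated last two columns, $(\alpha,-\alpha)$ in both.
\end{enumerate}

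The main obstacle is sign bookkeeping in the $k=2n,2n+1$ terms, where $\nabla_{v_k}v_k\neq0$ and the timelike sign enters. I would handle this systematically via the parallel null vector $N$. Rather than expanding all terms, first verify $\nabla N=0$ and $\nabla_N=0$ from Theorem \ref{civita}. The cancellations then follow by writing $v_{2n}=v_{2n+1}+N$ in the relevant $R_{v_k u}v_k$. As a consistency check, $n=1$ gives $\alpha=0$, recovering flatness.
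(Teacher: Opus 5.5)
Your proposal is correct and is essentially the paper's own computation: you evaluate each $R_{v_ku}v_k$ from the connection table of Theorem~\ref{civita} and get the same individual terms as the paper (e.g.\ $R_{v_{2n}v_1}v_{2n}=R_{v_{2n+1}v_1}v_{2n+1}=\frac{1}{8\sigma_1^2}v_1$ and $R_{v_1v_{2n}}v_1=-\nabla_{v_1}\nabla_{v_{2n}}v_1=\frac{1}{8\sigma_1^2}N$); your added observation that $N=v_{2n}-v_{2n+1}$ is parallel with $\nabla_N=0$ explains the paper's cancellations and gives $\operatorname{Ric}(v_{2n+1})=\operatorname{Ric}(v_{2n})$ without recomputing it. Delete the stray claim that the $k=2i$ term ``gives $\frac{3}{4\sigma_i^2}\cdot\frac12(\ldots)$'', a leftover from the $\mu$ case, since that term is $0$, as your next sentence shows; and in Step~3 say that the $k=2n-1$ term vanishes on its own, because by your $N$-argument the $k=2n$ and $k=2n+1$ terms cancel each other for every $u$ rather than cancelling against it.
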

\begin{proof}
Let us compute $\operatorname{Ric}(v_1)$, we have 
$$\operatorname{Ric}(v_1) = \displaystyle{\sum_{k = 2}^{2n + 1} \phi(v_k, v_k)R_{v_kv_1}v_k},$$
where
$$R_{v_kv_1}v_k = \nabla_{[v_k, v_1]}v_k - \nabla_{v_k}\nabla_{v_1}v_k + \nabla_{v_1}\nabla_{v_k}v_k.$$
According to the bracket in $\mathscr{B}$ and the Levi-Civita connection, we see that the only nonzero terms in $R_{v_kv_1}v_k$ are the following
$$\begin{aligned}
R_{v_2v_1}v_2 &= \nabla_{[v_2, v_1]}v_2 - \nabla_{v_2}\nabla_{v_1}v_2\\
&= \frac{-1}{\sqrt{2}\sigma_1}\nabla_{v_{2n}}v_2 + \frac{1}{\sqrt{2}\sigma_1}\nabla_{v_{2n + 1}}v_2 - \nabla_{v_2}\left(\frac{1}{2\sqrt{2}\sigma_1}v_{2n} - \frac{1}{2\sqrt{2}\sigma_1}v_{2n + 1}\right)\\
&= \frac{-1}{\sqrt{2}\sigma_1}\nabla_{v_{2n}}v_2 + \frac{1}{\sqrt{2}\sigma_1}\nabla_{v_{2n + 1}}v_2 - \frac{1}{2\sqrt{2}\sigma_1}\nabla_{v_{2}}v_{2n} + \frac{1}{2\sqrt{2}\sigma_1}\nabla_{v_{2}}v_{2n + 1}\\
&= \frac{1}{\sqrt{2}\sigma_1}\left(\nabla_{v_{2n + 1}}v_2 - \nabla_{v_{2n}}v_2\right) + \frac{1}{2\sqrt{2}\sigma_1}\left(\nabla_{v_{2}}v_{2n + 1} - \nabla_{v_{2}}v_{2n}\right)\\
&= 0.\\
R_{v_{2n}v_1}v_{2n} &= - \nabla_{v_{2n}}\nabla_{v_1}v_{2n} + \nabla_{v_{1}}\nabla_{v_{2n}}v_{2n}\\
&= \frac{1}{2\sqrt{2}\sigma_1}\nabla_{v_{2n}}v_{2} + \frac{1}{2}\underbrace{\nabla_{v_1}v_{2n - 1}}_{= 0}\\
&= \frac{1}{8\sigma_1^2}v_1.\\
R_{v_{2n + 1}v_1}v_{2n + 1} &= - \nabla_{v_{2n + 1}}\nabla_{v_1}v_{2n + 1} + \nabla_{v_{1}}\nabla_{v_{2n + 1}}v_{2n + 1}\\
&= \frac{1}{2\sqrt{2}\sigma_1}\nabla_{v_{2n + 1}}v_{2} + \frac{1}{2}\underbrace{\nabla_{v_1}v_{2n - 1}}_{= 0}\\
&= \frac{1}{8\sigma_1^2}v_1.
\end{aligned}$$
Since $v_{2n + 1}$ is timelike, then 
$$\operatorname{Ric}(v_1) = R_{v_{2n}v_1}v_{2n} - R_{v_{2n + 1}v_1}v_{2n + 1} = 0.$$
Using the same reasoning, we find that
$$\operatorname{Ric}(v_i) = 0,\; \forall i = 1, \ldots, 2n - 1.$$
Next, let us compute $\operatorname{Ric}(v_{2n})$, we have
$$\operatorname{Ric}(v_{2n}) = \displaystyle{\sum_{k = 1}^{2n + 1} \phi(v_k, v_k)R_{v_kv_{2n}}v_k}.$$
After some careful calculations, we find that
$$\begin{aligned}
R_{v_1v_{2n}}v_1 &= \frac{1}{8\sigma_1^2}v_{2n} - \frac{1}{8\sigma_1^2}v_{2n + 1},\\
R_{v_2v_{2n}}v_2 &= \frac{1}{8\sigma_1^2}v_{2n} - \frac{1}{8\sigma_1^2}v_{2n + 1},\\
&\vdots \\
R_{v_{2n - 3}v_{2n}}v_{2n - 3} &= \frac{1}{8\sigma_{n - 1}^2}v_{2n} - \frac{1}{8\sigma_{n - 1}^2}v_{2n + 1},\\
R_{v_{2n - 2}v_{2n}}v_{2n - 2} &= \frac{1}{8\sigma_{n - 1}^2}v_{2n} - \frac{1}{8\sigma_{n - 1}^2}v_{2n + 1},\\
R_{v_{2n - 1}v_{2n}}v_{2n - 1} &= R_{v_{2n + 1}v_{2n}}v_{2n + 1} = 0.
\end{aligned}$$
Therefore
$$\begin{aligned}
\operatorname{Ric}(v_{2n}) &= \displaystyle{\sum_{k = 1}^{2n - 2} R_{v_kv_{2n}}v_k}\\
&= \displaystyle{\sum_{k = 1}^{n - 1} \left(\frac{1}{4\sigma_{k}^2}v_{2n} - \frac{1}{4\sigma_{k}^2}v_{2n + 1}\right)}\\
&= \displaystyle{\sum_{k = 1}^{n - 1} \frac{1}{4\sigma_{k}^2}}v_{2n} - \displaystyle{\sum_{k = 1}^{n - 1} \frac{1}{4\sigma_{k}^2}}v_{2n + 1}\\
&= \alpha v_{2n} - \alpha v_{2n + 1}.
\end{aligned}$$
Finally, for $\operatorname{Ric}(v_{2n + 1})$, we obtain that
$$\begin{aligned}
R_{v_1v_{2n + 1}}v_1 &= \frac{1}{8\sigma_1^2}v_{2n} - \frac{1}{8\sigma_1^2}v_{2n + 1},\\
R_{v_2v_{2n + 1}}v_2 &= \frac{1}{8\sigma_1^2}v_{2n} - \frac{1}{8\sigma_1^2}v_{2n + 1},\\
&\vdots \\
R_{v_{2n - 3}v_{2n + 1}}v_{2n - 3} &= \frac{1}{8\sigma_{n - 1}^2}v_{2n} - \frac{1}{8\sigma_{n - 1}^2}v_{2n + 1},\\
R_{v_{2n - 2}v_{2n + 1}}v_{2n - 2} &= \frac{1}{8\sigma_{n - 1}^2}v_{2n} - \frac{1}{8\sigma_{n - 1}^2}v_{2n + 1},\\
R_{v_{2n - 1}v_{2n + 1}}v_{2n - 1} &= R_{v_{2n}v_{2n + 1}}v_{2n} = 0.
\end{aligned}$$
Therefore
$$\begin{aligned}
\operatorname{Ric}(v_{2n + 1}) &= \displaystyle{\sum_{k = 1}^{2n - 2} R_{v_kv_{2n + 1}}v_k}\\
&= \displaystyle{\sum_{k = 1}^{n - 1} \left(\frac{1}{4\sigma_{k}^2}v_{2n} - \frac{1}{4\sigma_{k}^2}v_{2n + 1}\right)}\\
&= \alpha v_{2n} - \alpha v_{2n + 1}.
\end{aligned}$$
Consequently, the Ricci operator $\operatorname{Ric}$ of $\left(\mathfrak{h}_{2n + 1}, \phi\right)$ is represented in the basis $\mathscr{B}$ by the matrix given in the theorem above.
\end{proof}
\begin{theorem}
The Lorentzian inner product $\phi$ is an algebraic Ricci soliton where $\eta = 0$ and $D = \operatorname{Ric}$ is a derivation of $\mathfrak{h}_{2n + 1}$ with respcet to the orthonormal basis $\mathscr{B}$. Moreover, $\phi$ is a steady algebraic Ricci soliton.
\end{theorem}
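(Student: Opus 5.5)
With $\eta = 0$ the soliton equation reads $\operatorname{Ric} = D$, so the plan is simply to show that the Ricci operator of $(\mathfrak{h}_{2n+1}, \phi)$, as computed in the preceding theorem, is itself a derivation of $\mathfrak{h}_{2n+1}$. The structure of that operator makes this transparent. It vanishes on $v_1, \ldots, v_{2n-1}$. It sends both $v_{2n}$ and $v_{2n+1}$ to $\alpha(v_{2n} - v_{2n+1}) = \sqrt{2}\,\alpha z$. Its image is therefore contained in the center $\mathbb{R}z$. Moreover, since $z = \frac{1}{\sqrt{2}}(v_{2n} - v_{2n+1})$, we get $\operatorname{Ric}(z) = \frac{1}{\sqrt{2}}(\sqrt{2}\alpha z - \sqrt{2}\alpha z) = 0$.

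The derivation identity $[D u, v] + [u, D v] = D[u,v]$ is bilinear, so it suffices to check it on pairs of basis vectors, and the two observations above settle every pair. The left-hand side always vanishes, because $D u$ and $D v$ lie in the center. The right-hand side also always vanishes, because $[u,v] \in \mathbb{R}z$ and $D(z) = 0$.

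For concreteness I would also confirm the nontrivial brackets directly in the basis $\mathscr{B}$:
\begin{itemize}
\item For $[v_{2i-1}, v_{2i}]$ with $i \le n-1$, both sides are $0$, since $D(v_{2n} - v_{2n+1}) = 0$.
\item For $[v_{2n-1}, v_{2n}] = \frac12(v_{2n} - v_{2n+1})$, the left side is $[0, v_{2n}] + [v_{2n-1}, \alpha(v_{2n} - v_{2n+1})] = \alpha\left(\tfrac12 - \tfrac12\right)(v_{2n} - v_{2n+1}) = 0$, and the right side is $D$ of a multiple of $z$, hence $0$.
\item The same computation handles $[v_{2n-1}, v_{2n+1}]$ and $[v_{2n}, v_{2n+1}] = 0$. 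For the latter, the left side is $[\sqrt{2}\alpha z, v_{2n+1}] + [v_{2n}, \sqrt{2}\alpha z] = 0$, again by centrality of $z$.
\end{itemize}

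It follows that $\operatorname{Ric} = 0 \cdot I_{2n+1} + D$ with $D = \operatorname{Ric}$ a derivation, and the soliton is steady because $\eta = 0$. There is no genuine obstacle here. The one point needing care is the bookkeeping with the null combination $v_{2n} - v_{2n+1}$, namely seeing that it spans the center and is annihilated by $\operatorname{Ric}$.

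For $n=1$ we have $\alpha = 0$, so $\operatorname{Ric} = 0$. The flatness asserted for $n = 1$ goes further, though: it requires checking that the full curvature tensor vanishes, which follows from the connection in Theorem \ref{civita} restricted to $v_1, v_2, v_3$. That check belongs to the discussion that follows, not to this statement.
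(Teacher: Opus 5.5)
Your proof is correct, and it takes a more structural route than the paper.

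The paper proceeds computationally. It starts from an arbitrary decomposition $\operatorname{Ric} = \eta I_{2n+1} + D$ and uses the bracket $[v_1, v_2]$ to force $\eta = 0$. It then checks the derivation identity for $D = \operatorname{Ric}$ bracket by bracket: $[v_{2n-1}, v_{2n}]$, $[v_{2n-1}, v_{2n+1}]$, and a few sample vanishing brackets, with the rest declared straightforward.

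You instead fix $\eta = 0$ and observe two facts:
\begin{itemize}
\item $\operatorname{Ric}$ maps $\mathfrak{h}_{2n+1}$ into the center $\mathbb{R}z$;
\item $\operatorname{Ric}$ annihilates $z$, which spans $[\mathfrak{h}_{2n+1}, \mathfrak{h}_{2n+1}]$.
\end{itemize}
Any linear map with these two properties is a derivation, because both sides of the derivation identity vanish identically. This settles all pairs of basis vectors at once and explains why the verification succeeds, whereas the paper's case-by-case check leaves most vanishing brackets to the reader.

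What the paper's computation adds is that $\eta = 0$ is forced, not merely possible. In your framework this costs one line. A nonzero multiple $cI$ of the identity is never a derivation of $\mathfrak{h}_{2n+1}$, since it would give $2c\,z = c\,z$. Hence $\eta$ is uniquely determined by any soliton decomposition, and the soliton is steady in the strong sense.

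Your argument also covers $n = 1$ uniformly: there $\alpha = 0$ and $\operatorname{Ric} = 0$.
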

\begin{proof}
Assume that $\operatorname{Ric} = \eta I_{2n + 1} + D$, where $\eta \in \mathbb{R}$ and $D$ is a derivation of $\mathfrak{h}_{2n + 1}$. Then 
$$D = \operatorname{Ric} - \eta I_{2n + 1} = \begin{bmatrix}
& -\eta &  &  &  &  &  \\
& & -\eta &  &  &  & \\
& &  & \ddots &  & \\
& &  &  & -\eta &  &  & \\
& &  &  &   & \alpha - \eta & \alpha\\
& &  &  &   & -\alpha & -\alpha - \eta
\end{bmatrix}.$$
Given that $D$ is a derivation of $\mathfrak{h}_{2n + 1}$, the condition below holds
\begin{eqnarray*}
& [D(v_1), v_2] + [v_1, D(v_2)] = D[v_1, v_2] \\ \Leftrightarrow & \left[-\eta v_1, v_2\right] + \left[ v_1, -\eta v_2\right] = \frac{1}{\sqrt{2}\sigma_1}\left(D(v_{2n}) - D(v_{2n + 1})\right)\\
\Leftrightarrow& -2\eta[v_1, v_2] = \frac{1}{\sqrt{2}\sigma_1}\left((\alpha - \eta)v_{2n} - \alpha v_{2n + 1} - \alpha v_{2n} + (\alpha + \eta)v_{2n + 1}\right)\\
\Leftrightarrow& -2\eta\frac{1}{\sqrt{2}\sigma_1}\left(v_{2n} - v_{2n + 1}\right) = \frac{1}{\sqrt{2}\sigma_1}\left(-\eta v_{2n} + \eta v_{2n + 1}\right)\\
\Leftrightarrow& -2\eta\frac{1}{\sqrt{2}\sigma_1}\left(v_{2n} - v_{2n + 1}\right) = \frac{-\eta}{\sqrt{2}\sigma_1}\left(v_{2n} - v_{2n + 1}\right)\\
\Leftrightarrow& 2\eta = \eta \Leftrightarrow \eta = 0.
\end{eqnarray*}
The conditions 
$$[D(v_i), v_{i + 1}] + [v_i, D(v_{i + 1})] = D[v_i, v_{i + 1}], \quad \forall i = 3, 5, \ldots, 2n - 3$$
are similar to the previous condition. Hence $\eta = 0$ and $D = \operatorname{Ric}$. Next, we have
\begin{eqnarray*}
& [\underbrace{D(v_{2n - 1})}_{= 0}, v_{2n}] + [v_{2n - 1}, D(v_{2n})] = D[v_{2n - 1}, v_{2n}] \\ \Leftrightarrow & \left[ v_{2n - 1}, \alpha v_{2n} - \alpha v_{2n + 1}\right] = \frac{1}{2}\left(D(v_{2n}) - D(v_{2n + 1})\right)\\
\Leftrightarrow& \alpha[v_{2n - 1}, v_{2n}] - \alpha[v_{2n - 1}, v_{2n + 1}]  = \frac{1}{2}\left(\alpha v_{2n} - \alpha v_{2n + 1} - \alpha v_{2n} + \alpha v_{2n + 1}\right)\\
\Leftrightarrow& 0 = 0.
\end{eqnarray*}
Hence, this condition is satisfied by $D = \operatorname{Ric}$. Similarly, we see that the following condition is satisfied
$$[D(v_{2n - 1}), v_{2n + 1}] + [v_{2n - 1}, D(v_{2n + 1})] = D[v_{2n - 1}, v_{2n + 1}].$$
It is straightforward to check thad $D = \operatorname{Ric}$ satisfies the derivation condition for all vanishing brackets. For example, we have
$$\begin{aligned}
[\operatorname{Ric}(v_1), v_3] + [v_1, \operatorname{Ric}(v_3)] &= 0, \quad\text{satisfied}\\
&\vdots\\
[\operatorname{Ric}(v_1), v_{2n + 1}] + [v_1, \operatorname{Ric}(v_{2n + 1})] &= 0, \quad\text{satisfied}\\
&\vdots\\
[\operatorname{Ric}(v_{2n}), v_{2n + 1}] + [v_{2n}, \operatorname{Ric}(v_{2n + 1})] &= 0, \quad\text{satisfied}.
\end{aligned}$$
Consequently, the Lorentzian inner product $\phi$ is a steady algebraic Ricci soliton.
\end{proof}
\begin{remark}
For $n = 1$, $\phi$ is represented in the basis $\left\lbrace f_1, g_1, z\right\rbrace$ by the matrix
$$\phi = \begin{bmatrix}
1 & 0 & 0\\
0 & 0 & 1\\
0 & 1 & 0
\end{bmatrix}.$$
An orthonormal basis of $(\mathfrak{h}_3, \phi)$ is given by
$$v_1 = f_1, \qquad v_2 = \frac{1}{\sqrt{2}}(g_1 + z), \qquad v_3 = \frac{1}{\sqrt{2}}(g_1 - z).$$
The bracket is described by
$$[v_1, v_2] = [v_1, v_3] = \frac{1}{2}(v_2 - v_3).$$
By theorem \ref{civita}, we recover the following Levi-Civita connection of $(\mathfrak{h}_3, \phi)$
\begin{multicols}{2}
$\nabla_{v_1}v_1 = 0$\par
$\nabla_{v_2}v_2 = \nabla_{v_3}v_3 = \frac{1}{2}v_1$\par
$\nabla_{v_1}v_2 = \nabla_{v_1}v_3 = 0$\par
$\nabla_{v_2}v_1 = \nabla_{v_3}v_1 = \frac{-1}{2}v_2 + \frac{1}{2}v_3$\par
$\nabla_{v_2}v_3 = \nabla_{v_3}v_2 = \frac{1}{2}v_1$
\end{multicols}
Now, it is straightforward to check that
$$R_{v_1v_2} = R_{v_1v_3} = R_{v_2v_3} = 0.$$
Consequently, $(\mathfrak{h}_3, \phi)$ is flat.
\end{remark}

\end{document}